\numberwithin{equation}{section}
\newtheorem{theorem}{Theorem}[section]
\newtheorem{prop}[theorem]{Proposition}
\newtheorem{definition}[theorem]{Definition}
\def\R{\mathbb{R}}
\def\M{\sf{M}}
\def\B{\mathbb{B}(0,r)}
\def\U{\sf{O}}
\def\Uk{\sf{O}_{\kappa}}
\def\Q{\mathbb{B}^{m}}
\def\Qt{\mathbb{B}^{2}}
\def\pk{\pi_\kappa}
\def\F{\mathfrak{F}}
\def\N{\mathbb{N}}
\def\ttm{{\Theta}^{\ast}_{\mu}}
\def\ttl{{\Theta}^{\ast}_{\lambda,\mu}}
\def\Xt{\zeta_{1}}
\def\rh{{\varrho}^{\ast}_{\lambda}}
\def\tu{{T}_{\mu}}
\def\ez{\mathbb{E}_{0}}
\def\ef{\mathbb{E}_{1}}
\def\efa{\mathbb{E}_{1}^{a}}
\begin{document}

\title{Real Analytic Solutions to the Willmore Flow}
\author{Yuanzhen Shao}
\address{Department of Mathematics,
         Vanderbilt University, 
         Nashville, TN 37240, USA}
\email{yuanzhen.shao@vanderbilt.edu}

\date{December 29, 2012}

\let\thefootnote\relax\footnote{2010 Mathematics Subject Classification: 35B65, 35K55, 53A05, 53C44, 58J99}
\let\thefootnote\relax\footnote{Key words: real analytic solutions, the Willmore flow, mean curvature, Gaussian curvature, geometric evolution equations, the Implicit Function Theorem, maximal regularity}

\thispagestyle{empty}

\begin{abstract}
In this paper, a regularity result for the Willmore flow is presented. It is established by means of a truncated translation technique in conjunction with the Implicit Function Theorem.
\end{abstract}
\maketitle

\section{\bf Introduction}
\medskip
The Willmore flow consists in looking for an oriented, closed, compact moving hypersurface $\Gamma(t)$ immersed in ${\R}^{3}$ evolving subject to the law 
\begin{equation}
\label{original eq 1.1}
\begin{cases}
V(t)=-\Delta_{\Gamma(t)}H_{\Gamma(t)}-2H_{\Gamma(t)}(H^2_{\Gamma(t)}-K_{\Gamma(t)}),\\
\Gamma(0)=\Gamma_0.
\end{cases} 
\end{equation}
Here $V(t)$ denotes the velocity in the normal direction of $\Gamma$ at time $t$ and $\Delta_{\Gamma(t)}$ stands for the Laplace-Beltrami operator, while $H_{\Gamma(t)}$ is the normalized mean curvature of $\Gamma(t)$. Finally, $K_{\Gamma(t)}$ denotes the Gaussian curvature.
\smallskip\\
The equilibria of \eqref{original eq 1.1} appear as the critical points of the Willmore functional, or sometimes called the Willmore energy. For a smooth immersion $f:{\Gamma}\rightarrow{\R}^{3}$ of a closed oriented two-dimensional manifold $\Gamma$, the Willmore functional is defined as:
\begin{equation}
\label{Willmore functional}
W(f)=\int\limits_{f(\Gamma)} H_{f(\Gamma)}^{2}\, d\sigma{,}
\end{equation}
where $d\sigma$ is the area element on $f(\Gamma)$ with respect to the Euclidean metric in ${\R}^3$. The critical surfaces of this functional, called the Willmore surfaces, satisfy the equation:
\begin{equation}
\label{Euler-Lagrange equation}
\Delta_{f(\Gamma)}H_{f(\Gamma)}+2H_{f(\Gamma)}^{3}-2H_{f(\Gamma)}K_{f(\Gamma)}=0{.}
\end{equation}
The reader may consult \cite[Section~7.4]{TJWR} for a brief historical account and a proof of this variational formula. The proof therein is derived by computing the critical points of all normal variations of the hypersurface ${f(\Gamma)}$. 
\smallskip\\
A generalization of the Willmore functional \eqref{Willmore functional} in higher dimensions is studied by B.-Y.~Chen \cite{BYCV}. He extends \eqref{Willmore functional} for smooth immersions $f:{\Gamma}\rightarrow{\R}^{m+1}$ of the $m$-dimensional closed oriented manifold $\Gamma$ into ${\R}^{m+1}$:
\begin{align*}
W(f)=\int\limits_{f(\Gamma)} H_{f(\Gamma)}^{m}\, d\sigma
\end{align*}
with $d\sigma$ standing for the volume element with respect to the Euclidean metric in ${\R}^{m+1}$. The critical points of this functional are now of the form:
\begin{align*}
\Delta_{f(\Gamma)}H_{f(\Gamma)}^{m-1}+m(m-1)H_{f(\Gamma)}^{m+1}-H_{f(\Gamma)}^{m-1}R_{f(\Gamma)}=0{.}
\end{align*}
Here $R_{f(\Gamma)}$ denotes the scalar curvature. We may observe that $R_{f(\Gamma)}=2K_{f(\Gamma)}$ when $m=2$, so this Euler-Lagrange equation agrees with \eqref{Euler-Lagrange equation} in the two-dimensional case. However, this generalization has the drawback that the corresponding Willmore functional is no longer conformally invariant except when $m=2$.
\smallskip\\
The Willmore problem has been studied by many authors, among them T.J.~Willmore, W.~Blaschke, B.-Y.~Chen, J.L.~Weiner, P.~Li, S.-T.~Yau, R.~Bryant, R.~Kusner, L.~Simon, U.F. Mayer, G.~Simonett, M.~Bauer, E.~Kuwert, R.~Sch\"atzle, U.~Pinkall, I.~Sterling, M.U.~Schmidt, C.M.~Fernando, and N.~Andr\'e. See for example \cite{MBEK,RBDW,BYCV,FAWC,RKCW,EKSE,KSGF,EKRS,LYCW,MSSI,MSWF,UPHT,PSWS,MSWC,LSMW,GSWF,JWPC,TJWR}. It is well-known that the Willmore functional is bounded below by $4\pi$ with equality only for the round sphere. Then the famous Willmore conjecture due to T.J.~Willmore asserts that for any immersed $2$-dimensional torus into $\R^3$ we have $W(f)\geq{2\pi^2}$, and it suggests that the $2$-dimensional Clifford torus achieves the minimum of the Willmore functional amongst all immersed tori in $\R^3$. In 1982, P.~Li and S.-T.~Yau \cite{LYCW} show that any immersion with $W(f)<8\pi$ must in fact be an embedding. In other words, it will suffice to estimate $W(f)$ for embeddings. A classification of all Willmore immersions $f:\mathbb{S}^2\rightarrow\R^3$ is obtained by R.L.~Bryant \cite{RBDW}. The possible values of 
\begin{align*}
W(f)=\int\limits_{f(\mathbb{S}^2)} H_{f(\mathbb{S}^2)}^{2}\, d\sigma
\end{align*}
are $4n\pi$ with $n=1$, or $n\geq{4}$ and $n$ even, or $n\geq{9}$ and $n$ odd. Existence and regularity for embedded tori in the Willmore conjecture has been proven by L.~Simon \cite{LSMW}, and later this result is generalized by M.~Bauer, E.~Kuwert \cite{MBEK} for an extension of the conjecture by R.~Kusner \cite{RKCW} to higher genus cases. An existence, uniqueness and regularity result on the Willmore flow is presented by G.~Simonett \cite{GSWF}. It is proven therein that the Willmore flow admits a unique smooth solution. Moreover, this solution exists globally when it is initially close enough to spheres in the $C^{2+\alpha}$-topology and is exponentially attracted by spheres. In \cite{MSSI}, U.F Mayer and G. Simonett  prove that the Willmore flow can drive embedded surfaces to a self-intersection in a finite time interval. Moreover, numerical simulations in \cite{MSWF} indicate that the Willmore flow can develop true singularities (topological changes) in finite time. E.~Kuwert and R.~Sch\"atzle \cite{EKSE} show that the smooth solutions are global as long as the initial Willmore energy is sufficiently small. Later, the same authors improve this result in \cite{EKRS} by finding an explicit optimal bound for the restriction on the initial energy, that is, if the smooth immersion $f_0:\Gamma\rightarrow\R^3$ satisfies $W(f_0)\leq{8\pi}$, then the solution with initial data $f_0$ exists smoothly for all time and converges to a round sphere. Recently, in a breakthrough paper, C.M.~Fernando and N.~Andr\'e \cite{FAWC} prove the Willmore conjecture for surfaces of arbitrary genus $g\geq 1$, i.e., $W(f)\geq 2\pi^2$ for all embedded $\Gamma$ with genus $g\geq 1$, and the equality holds iff $\Gamma$ is conformal to the Clifford torus.
\medskip\\ 

{\textbf {Assumptions:}}
Throughout this paper, we always assume that $({\M},g)$ is a compact, closed, immersed, oriented, real analytic hypersurface in ${\R}^{3}$ endowed with the Euclidean metric $g$ with the exception of Section~3, wherein we remove the restriction on the dimension of ${\M}$. The notation $(\cdot|\cdot)$ always stands for the standard inner product in ${\R}^{3}$. We may find for ${\M}$ a normalized atlas $({\Uk},\varphi_{\kappa})_{{\kappa}\in \Lambda}$, where an atlas is said to be normalized if $\varphi_{\kappa}({\Uk})= {\Qt}$ for all $\kappa\in\Lambda$. Here ${\Qt}$ is the open unit ball centered at the origin in ${\R}^2$. Put $\psi_{\kappa}=\varphi_{\kappa}^{-1}$. 
\smallskip\\
A family $({\pk})_{{\kappa}\in\Lambda}$ is called a localization system subordinate to $({\Uk},\varphi_{\kappa})_{{\kappa}\in \Lambda}$ if:
\begin{itemize}
\item[(L1)] ${\pk}\in\mathcal{D}({\Uk},[0,1])$ and $(\pi_{\kappa}^{2})_{\kappa\in{\Lambda}}$ is a partition of unity subordinate to the open cover $({\Uk})_{{\kappa}\in\Lambda}$. 
\item[(L2)] Any ${\pk}$ and $\pi_{\eta}$ satisfying ${\rm{supp}}({\pk})\cap {\rm{supp}}({\pi_\eta})\neq \emptyset$ have their supports located within the same local chart.
\end{itemize}
For any manifold satisfying the above assumptions, there exists a localization system. See \cite[Lemma~3.2]{FSM} for a proof. 
\medskip\\

\textbf{Notations:} 
Throughout this paper, ${\N}_0$ stands for the set of all natural numbers including $0$. For any time interval $I$, $\dot{I}$ always denotes the interior of $I$.
\smallskip\\
Fix $0<\alpha<1$. Let $\gamma\in(0,1]$ and $E_0:=h^{\alpha}({\M})$, $E_1:=h^{4+\alpha}({\M})$. For notational brevity, we simply write $\F(\mathcal{O},\R)$ and $\F({\M},\R)$ as $\F(\mathcal{O})$ and $\F({\M})$, where $\mathcal{O}$ is an open subset of $\R^2$ and $\F$ stands for any of the function spaces in this paper.
\smallskip\\
In the sequel, we always denote $(E_0,E_1)_{\gamma}$ by $E_{\gamma}$, where $(\cdot,\cdot)_{\gamma}$ is the continuous interpolation method. See \cite[Definition~1.2.2]{LSOR} for a definition. Note that the continuous interpolation method $(\cdot,\cdot)_{\theta,\infty}^0$ coincides with $(\cdot,\cdot)_{\theta}$ in the suggested reference. In particular, we set $(E_0,E_1)_1:=E_1$. 
\smallskip\\
For some fixed interval $I=[0,T]$ and some Banach space $E$, we define
\begin{align*}
&BU\!C_{1-\gamma}(I,E):=\{u\in{C(\dot{I},E)};[t\mapsto{t^{1-\gamma}}u]\in{BU\!C(\dot{I},E)},\lim_{t\to{0^+}}{t^{1-\gamma}}\|u\|=0\},\\
& \|u\|_{C_{1-\gamma}}:=\sup_{t\in{\dot{I}}}{t^{1-\gamma}}\|u(t)\|_{E},
\end{align*}
and
\begin{center}
$BU\!C_{1-\gamma}^1(I,E):=\{u\in{C^1(\dot{I},E)}: u,\dot{u}\in{BU\!C_{1-\gamma}(I,E)}\}$.
\end{center}
In particular, we put
\begin{center}
$BU\!C_0(I,E):=BU\!C(I,E)$\hspace*{1em} and \hspace*{1em} $BU\!C^1_0(I,E):=BU\!C^1(I,E)$.
\end{center}
In addition, if $I=[0,T)$ is a half open interval, then
\begin{align*}
&C_{1-\gamma}(I,E):=\{v\in{C(\dot{I},E)}:v\in{BU\!C_{1-\gamma}([0,t],E)},\hspace{.5em} t<T\},\\
&C^1_{1-\gamma}(I,E):=\{v\in{C^1(\dot{I},E)}:v,\dot{v}\in{C_{1-\gamma}(I,E)}\}.
\end{align*}
We equip these two spaces with the natural Fr\'echet topology induced by the topology of $BU\!C_{1-\gamma}([0,t],E)$ and $BU\!C_{1-\gamma}^1([0,t],E)$, respectively. 
\smallskip\\
Last but not least, we set
\begin{center}
${\ez}(I):=C(I,E_0)$\hspace*{1em} and \hspace*{1em} ${\ef}(I):=C(I,E_1)\cap{C}^1(I,E_0)$.
\end{center}
\medskip
\goodbreak

It will be shown in this paper that the Willmore flow \eqref{original eq 1.1} admits a real analytic solution jointly in time and space. Our motivation for a real analytic solution is mainly stimulated by the following facts: a compact closed real analytic manifold cannot have a "flat part", and real analyticity in time implies that the hyersurface should move permanently in the interval of existence.
\begin{theorem}
\label{main theorem}
Let $0<\alpha<1$. Suppose that $\Gamma_0$ is a compact closed immersed oriented hypersurface in ${\R}^{3}$ belonging to the class $h^{2+\alpha}$. Then the Willmore flow \eqref{original eq 1.1} has a unique local solution $\Gamma=\{\Gamma(t):t\in[0,T)\}$ for some $T>0$. Moreover,
\begin{align*}
\mathcal{M}:=\bigcup_{t\in(0,T)}(\{t\}\times\Gamma(t))
\end{align*}
is a real analytic submanifold in ${\R}^{4}$. In particular, each manifold $\Gamma(t)$ is real analytic for $t\in(0,T)$. 
\end{theorem}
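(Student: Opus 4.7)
The plan is to convert the geometric flow \eqref{original eq 1.1} into a fourth-order quasilinear scalar parabolic PDE via the Hanzawa transformation, establish local well-posedness by maximal regularity, and then upgrade the solution to real analyticity by a parameter-trick argument based on a truncated time scaling combined with truncated spatial translations, in the spirit of the abstract's announcement. First I would choose a real analytic reference surface $\Sigma$ close to $\Gamma_0$ in the $h^{2+\alpha}$-topology (e.g.\ by mollification), and parametrize nearby hypersurfaces as normal graphs $\Gamma_\rho = \{x+\rho(x)\nu_\Sigma(x):x\in\Sigma\}$ for small $\rho$. Expressing $V$, $H$, $K$ and $\Delta_{\Gamma(t)}$ in this parametrization turns \eqref{original eq 1.1} into an equation of the form
\begin{equation*}
\partial_t\rho + \mathcal{A}(\rho)\rho = F(\rho),\qquad \rho(0)=\rho_0,
\end{equation*}
where $\mathcal{A}(\rho)$ is, to leading order, the bi-Laplacian on $\Sigma$, hence fourth-order strongly elliptic, and $F$ collects lower-order nonlinearities that depend real analytically on $\rho$ and its derivatives up to order three.

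Local existence and uniqueness for $\rho_0\in E_\gamma$ with an appropriate $\gamma\in(0,1]$ corresponding to the $h^{2+\alpha}$ trace then follow from the standard theory of maximal continuous (little-H\"older) regularity for quasilinear parabolic equations, giving a unique solution $\rho\in \ef([0,T))\cap C_{1-\gamma}([0,T),E_1)$. This already furnishes the local solution $\Gamma(t)=\Gamma_{\rho(t)}$ asserted in the theorem.

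For the analyticity claim I would apply the classical parameter trick. Fix an interior point $(t_0,x_0)\in (0,T)\times\Sigma$ and a small cutoff $\chi$ supported near $x_0$. For $(\lambda,\mu)$ in a small neighborhood of $(1,0)$ in $\R\times\R^2$ (working in a local chart), define the perturbed unknown
\begin{equation*}
\rh(t,x) := \rho\bigl(\lambda t,\, x+\chi(x)\mu\bigr).
\end{equation*}
A direct computation shows that $\rh$ satisfies a perturbed equation
\begin{equation*}
\partial_t \rh + \mathcal{A}_{\lambda,\mu}(\rh)\rh = F_{\lambda,\mu}(\rh),
\end{equation*}
where the coefficients of $\mathcal{A}_{\lambda,\mu}$ and $F_{\lambda,\mu}$ depend polynomially, and in particular real analytically, on $(\lambda,\mu)$ and real analytically on the pointwise values of $\rh$ and its derivatives. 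I would then recast this as a nonlinear map
\begin{equation*}
\Phi:\Lambda\times\efa([0,t_0])\longrightarrow \ez([0,t_0])\times E_\gamma,\qquad (\lambda,\mu,v)\mapsto\bigl(\partial_t v+\mathcal{A}_{\lambda,\mu}(v)v-F_{\lambda,\mu}(v),\,v(0)-\rho_0\bigr),
\end{equation*}
where $\efa([0,t_0])$ denotes the maximal regularity space with prescribed $h^{2+\alpha}$ initial value. At $(\lambda,\mu,v)=(1,0,\rho)$ one has $\Phi=0$, and the partial derivative $D_v\Phi(1,0,\rho)$ is a linear parabolic operator of maximal regularity type, hence a topological isomorphism between the corresponding spaces. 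Verifying that $\Phi$ is itself real analytic in all of its arguments (which follows from the polynomial/rational structure of the curvature terms and the analytic dependence of Nemytskii operators between little-H\"older spaces on their symbols) then allows me to invoke the analytic Implicit Function Theorem to obtain a real analytic map $(\lambda,\mu)\mapsto \rh$ into $\efa([0,t_0])$. Evaluating at $(t_0,x_0)$ gives
\begin{equation*}
\rho(\lambda t_0,\, x_0+\chi(x_0)\mu) = \rh(t_0,x_0),
\end{equation*}
which is real analytic in $(\lambda,\mu)$, so $\rho$ is real analytic in a neighborhood of $(t_0,x_0)$. Since $(t_0,x_0)$ was arbitrary in $(0,T)\times\Sigma$, the spacetime graph $\mathcal{M}$ is a real analytic submanifold of $\R^4$.

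The main obstacle I expect is the analytic dependence step: one must check that $(\lambda,\mu,v)\mapsto \mathcal{A}_{\lambda,\mu}(v)v-F_{\lambda,\mu}(v)$ is jointly real analytic between the little-H\"older-type maximal regularity spaces, not merely smooth. This requires (i)~showing that the composition with the truncated translation $x\mapsto x+\chi(x)\mu$ depends real analytically on $\mu$ as a map into the appropriate anisotropic little-H\"older spaces, which is delicate because translations on little-H\"older classes are only continuous in general; the cutoff $\chi$ is precisely what rescues analyticity by confining the translation to a coordinate patch. And (ii)~controlling the curvature nonlinearities, which are rational in the first-order derivatives of $\rho$ through the metric determinant $\sqrt{\det g_\rho}$, via careful estimates of multilinear forms in $h^{k+\alpha}$-norms. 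Once this analyticity is in place, everything else is a routine application of the analytic IFT and the continuous embedding $E_1\hookrightarrow C^{4+\alpha}$.
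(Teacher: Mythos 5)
Your proposal is correct and follows essentially the same route as the paper: normal-graph parametrization over a real analytic reference manifold, maximal H\"older regularity for local well-posedness, and the localized translation parameter trick combined with the analytic Implicit Function Theorem for joint space-time analyticity. The only deviations are cosmetic --- the paper uses a truncated time translation $t\mapsto t+\xi(t)\lambda$ and anchors the Implicit Function Theorem at an interior time $\varepsilon$ where the solution already lies in $E_1$ (so unweighted maximal regularity suffices), whereas you use the global scaling $t\mapsto\lambda t$ anchored at $t=0$ in weighted spaces; both are standard variants of the same argument.
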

For any open subset $\mathcal{O}\subset{\R}^{2}$, the little H\"older space $h^{s}(\mathcal{O})$ of order $s>0$ with $s\notin{\N}$ is the closure of $BU\!C^{\infty}(\mathcal{O})$ in $BU\!C^{s}(\mathcal{O})$. Here $BU\!C^{s}(\mathcal{O})$ is the Banach space of all bounded and uniformly H\"older continuous functions. The little H\"older space $h^{s}(\M)$ on $\M$ is defined in terms of a smooth atlas, that is, a function $u$ belongs to $h^{s}({\M})$ iff $\psi_{\kappa}^{\ast}{\pk}u\in h^{s}({\R}^{2})$, for each $\kappa\in\Lambda$.
\bigskip
\vspace{1em}

\section{\bf Parameterization over a Reference Manifold}
\medskip
In equation \eqref{original eq 1.1}, if we fix an initial hypersurface $\Gamma_0$ belonging to the class $h^{2+\alpha}$, then by the discussion in \cite[Section~4]{MCH} we can find a real analytic compact closed embedded oriented hypersurface ${\M}$, a function $\rho_0 \in h^{2+\alpha}(\M) $ and a parameterization 
\begin{center}
$\Psi_{\rho_0}:{\M}\rightarrow{\R}^{3}$,\hspace{.5em} $\Psi_{\rho_0}(p):=p+\rho_0(p){\nu}_{\M}(p)$
\end{center}
such that $\Gamma_0={\rm{im}}(\Psi_{\rho_0})$. Here ${\nu}_{\M}(p)$ denotes the unit normal with respect to a chosen orientation of $\M$ at $p$, and $\rho_0:{\M}\rightarrow (-a,a)$ is a real-valued function on ${\M}$, where $a$ is a sufficiently small positive number depending on the inner and outer ball condition of ${\M}$. The reader may consult \cite[Section~4.1]{MCH} for the precise bound of $a$. Thus $\Gamma_0$ lies in the $a$-tubular neighborhood of ${\M}$. In fact, it will suffice to assume $\Gamma_0$ to be a $C^2$-manifold for the existence of such a parameterization and a real analytic reference manifold. See \cite[Section~4]{MCH} for a detailed proof.
\smallskip\\
Analogously, if $\Gamma(t)$ is $C^1$-close enough to ${\M}$, then we can find a function $\rho:[0,T)\times{\M}\rightarrow(-a,a)$ for some $T>0$ and a parameterization
\begin{center}
$\Psi_{\rho}:{[0,T)}\times{\M}\rightarrow{\R}^{3}$,\hspace{.5em} $\Psi_{\rho}(t,p):=p+\rho(t,p){\nu}_{\M}(p)$
\end{center}
such that $\Gamma(t)={\rm{im}}(\Psi_{\rho}(t,\cdot))$ for every $t\in [0,T)$. It is worthwhile to mention that $\Psi_{\rho}$ admits an extension on $\R^{3}$, called Hanzawa transform, which was first introduced by E.I.~Hanzawa in \cite{HCSS}.
\smallskip\\
For any fixed $t$, I do not distinguish between $\rho(t,\cdot)$ and $\rho(t,\psi_{\kappa}(\cdot))$ in each local coordinate $({\Uk},\varphi_{\kappa})$ and abbreviate $\Psi_{\rho}(t,\cdot)$ to be $\Psi_{\rho}:=\Psi_{\rho}(t,\cdot)$. In addition, the hypersurface $\Gamma(t)$ will be simply written as $\Gamma_{\rho}$ as long as the choice of $t$ is of no importance in the context, or $\rho$ is independent of $t$.
\smallskip\\
We put 
\begin{align*}
\mho:=\{\rho\in{h^{2+\alpha}({\M})}: \|\rho\|_{\infty}^{\M}<a\}. 
\end{align*}
Here $\|\rho\|_{\infty}^{\M}:=\sup_{p\in{\M}}|\rho(p)|$. For any $\rho\in\mho$, ${\rm{im}}(\Psi_{\rho})$ constitutes a $h^{2+\alpha}$-hypersurface $\Gamma_{\rho}$. In this case, $\Psi_{\rho}$ defines a $h^{2+\alpha}$-diffeomorphism from ${\M}$ onto $\Gamma_{\rho}$. 
\smallskip\\
Here and in the following, it is understood that the Einstein summation convention is employed and all the summations run from $1$ to $2$ for all repeated indices. 

In \cite{MCH}, J.~Pr\"uss and G.~Simonett derive global expressions for many geometric objects of $\Gamma_{\rho}$ in terms of the function $\rho$. I will use some results therein to translate equation \eqref{original eq 1.1} into a differential equation in $\rho$. By \cite[formula~(23), (28)]{MCH}, we have the following explicit expressions for the components of the first fundamental form and the normal vector of $\Gamma_{\rho}$:
\begin{equation}
\label{gamma_ij}
g^{\Gamma}_{ij}=g_{ij}-2\rho{l}_{ij}+\rho^{2}l^{r}_{i}l_{jr}+\partial_{i}\rho\partial_{j}\rho,
\end{equation} 
and
\begin{equation}
\label{nu}
\nu_{\Gamma}=\beta(\rho)(\nu_{\M}-a(\rho)).
\end{equation}
In \eqref{gamma_ij}, the $l^i_j$'s are the components of the Weingarten tensor $L_{\M}$ of ${\M}$ with respect to $g$, i.e., $L_{\M}=l_{i}^{j}\tau^{i}\otimes\tau_j$, where $\{\tau_{i}=\partial_{i}\}$ forms a basis of $T_{p}{\M}$ at $p\in{\M}$ and $\{\tau^{i}\}$ is the dual basis to $\{\tau_{i}\}$, i.e., $(\tau^i|\tau_j)=\delta^i_j$. The extension of $L_{\M}$ into ${\R}^3$, by identifying it to be zero in the normal direction, is denoted by $L_{\M}^{\mathcal{E}}$, namely, $L_{\M}^{\mathcal{E}}=l_{i}^{j}\tau^{i}\otimes\tau_j+0\cdot\nu_{\M}\otimes\nu_{\M}$. It is a simple matter to check that
\begin{equation}
\label{standard basis}
\tau^{\Gamma}_{i}=(I-\rho{L_{\M}^{\mathcal{E}}})\tau_{i}+\nu_{\M}\partial_{i}\rho
\end{equation}
forms the standard basis of $T_{\Psi_{\rho}(p)}{\Gamma_{\rho}}$. In addition, the $l_{ij}$'s are the components of the second fundamental form $L^{\M}$ of the metric $g$. Finally, $g^{\Gamma}_{ij}=(\tau^{\Gamma}_{i}|\tau^{\Gamma}_{j})$ are the components of the first fundamental form of the Euclidean metric $g_{\Gamma}$ on $\Gamma_{\rho}$. We set $G^{\Gamma}(\rho)=(g^{\Gamma}_{ij})_{ij}$ and $G^{-1}_{\Gamma}(\rho)$ for its inverse.
\smallskip\\
In \eqref{nu}, the terms $a(\rho)$ and $\beta(\rho)$ read as
\begin{center}
$a(\rho)=(I-\rho{L_{\M}^{\mathcal{E}}})^{-1}\nabla_{\M}\rho$\hspace{1em} and\hspace{1em} $\beta(\rho)=[1+|a(\rho)|^2]^{-1/2}$. 
\end{center}
Here $\nabla_{\M}$ is the surface gradient on $\M$.
\smallskip\\
For sufficiently small $a>0$, $(I-\rho{L_{\M}^{\mathcal{E}}})$ is invertible. One can check that
\begin{align*}
I-\rho{L_{\M}^{\mathcal{E}}}=(\delta_{i}^{j}-\rho{l}_{i}^{j})\tau^{i}\otimes\tau_{j}+\nu_{\M}\otimes\nu_{\M}. 
\end{align*} 
Thus 
\begin{equation}
\label{Inverse formula}
(I-\rho{L_{\M}^{\mathcal{E}}})^{-1}=r_i^j(\rho)\tau^{i}\otimes\tau_{j}+\nu_{\M}\otimes\nu_{\M},
\end{equation}
where ${R}_{\rho}=(r_i^j(\rho))_{ij}=[(\delta_{i}^{j}-\rho{l}_{i}^{j})_{ij}]^{-1}$. By Cramer's rule, all the entries of ${R}_{\rho}$ possess the expression 
\begin{align*}
r_i^j(\rho)=\frac{P_{i}^{j}(\rho)}{Q_{i}^{j}(\rho)}
\end{align*}
in every local chart, where $P_{i}^{j}$ and $Q_{i}^{j}$ are polynomials in $\rho$ with real analytic coefficients and $Q_{i}^{j}\neq{0}$.
\smallskip\\
Substituting $(I-\rho{L_{\M}^{\mathcal{E}}})^{-1}$ by \eqref{Inverse formula}, we get
\begin{align*}
|a(\rho)|^2=( r^j_i(\rho) \partial_j\rho\tau^i| r^l_k(\rho) \partial_l\rho\tau^k)=g^{ik} r_i^j(\rho) r_k^l(\rho) \partial_{j}\rho \partial_{l}\rho.
\end{align*}
Then
\begin{align*}
\beta(\rho)=[1+|a(\rho)|^2]^{-1/2}=[1+g^{ik} r_i^j(\rho) r_k^l(\rho) \partial_{j}\rho \partial_{l}\rho]^{-1/2}. 
\end{align*}
Note that in every local chart 
\begin{align*}
\beta^2(\rho)=\frac{P^{\beta}(\rho)}{Q^{\beta}(\rho,\partial_{j}\rho)}, 
\end{align*}
where $P^{\beta}(\rho)$ is a polynomial in $\rho$ with real analytic coefficients and $Q^{\beta}(\rho,\partial_{j}\rho)\neq{0}$ is a polynomial in $\rho$ and its first order derivatives with real analytic coefficients. 
\smallskip\\
The normal velocity can be expressed as 
\begin{align*}
V(t)=(\partial_{t}\Psi_{\rho}|\nu_{\Gamma})=(\rho_{t}\nu_{\M}|\nu_{\Gamma})=\beta(\rho)\rho_{t}.
\end{align*}
Therefore, the first line of equation \eqref{original eq 1.1} is equivalent to
\begin{align*}
\rho_{t}=-\frac{1}{\beta(\rho)}[\Psi^{\ast}_{\rho}\Delta_{\Gamma_{\rho}}H_{\Gamma_{\rho}}+2\Psi^{\ast}_{\rho}H_{\Gamma_{\rho}}(H^2_{\Gamma_{\rho}}-K_{\Gamma_{\rho}})].
\end{align*}
\smallskip\\

Next we shall calculate the Gaussian curvature $K_{\Gamma_{\rho}}$ in terms of $\rho$. For simplicity, we write $K_{\rho}$ instead of $\Psi^{\ast}_{\rho}K_{\Gamma_{\rho}}$.
\smallskip\\
Because 
\begin{center}
$\partial_j \tau_i=\Gamma^{k}_{ij}\tau_k+l_{ij}\nu_{\M}$\hspace{1em} and \hspace{1em}$\partial_j \tau^{i}=-\Gamma^i_{jk}\tau^k+l^i_j\nu_{\M}$,
\end{center}
we may readily compute 
\begin{equation}
\label{derivative of WT}
\partial_{j}L_{\M}^{\mathcal{E}}
=\partial_{j}l^{k}_{i}\tau^{i}\otimes\tau_{k}-\Gamma^{i}_{jl}l_{i}^{k}\tau^{l}\otimes\tau_{k}
+\Gamma^{l}_{jk}l^{k}_{i}\tau^{i}\otimes\tau_{l}+l_{j}^{i}l^{k}_{i}\nu_{\M}\otimes\tau_{k}
+l_{jk}l^{k}_{i}\tau^{i}\otimes\nu_{\M}.
\end{equation} 

Denote by $L^{\Gamma}=(l_{ij}^{\Gamma})_{ij}$ the second fundamental form of $\Gamma_{\rho}$ with respect to $g_{\Gamma}$. Then by \eqref{nu} and \eqref{standard basis}, we can compute its components $l_{ij}^{\Gamma}$ as follows:
\begin{align*}
l_{ij}^{\Gamma}&=-(\tau^{\Gamma}_{i}|\partial_{j}\nu_{\Gamma})\\
&=-((I-\rho{L_{\M}^{\mathcal{E}}})\tau_{i}+\nu_{\M}\partial_{i}\rho|\beta({\partial_{j}\nu_{\M}-\partial_{j}a(\rho)}))-(\tau^{\Gamma}_{i}|\frac{\partial_{j}\beta}{\beta}\nu_{\Gamma})\\
&=\beta\{{l_{ij}}+\rho(L_{\M}^{\mathcal{E}}\tau_{i}|\partial_{j}\nu_{\M})+(\tau_{i}|\partial_{j}(\nabla_{\M}\rho))+((I-\rho{L_{\M}^{\mathcal{E}}})\tau_{i}|\partial_{j}[(I-\rho{L_{\M}^{\mathcal{E}}})^{-1}]\nabla_{\M}\rho)\\
&\hspace*{1em}+\partial_{i}\rho(\nu_{\M}|\partial_{j}[(I-\rho{L_{\M}^{\mathcal{E}}})^{-1}]\nabla_{\M}\rho)+\partial_{i}\rho(\nu_{\M}|(I-\rho{L_{\M}^{\mathcal{E}}})^{-1}[\partial_{j}(\nabla_{\M}\rho)])\}\\
&=\beta\{{l_{ij}}+\rho{l_{ik}(\tau^{k}|\partial_{j}\nu_{\M})}+(\tau_{i}|\partial_{j}(\nabla_{\M}\rho))+(\tau_{i}|\partial_{j}(\rho{L_{\M}^{\mathcal{E}}})(I-\rho{L_{\M}^{\mathcal{E}}})^{-1}\nabla_{\M}\rho)\\
&\hspace*{1em}+\partial_{i}\rho(\nu_{\M}|\partial_{j}(\rho{L_{\M}^{\mathcal{E}}})(I-\rho{L_{\M}^{\mathcal{E}}})^{-1}\nabla_{\M}\rho)+\partial_{i}\rho(\nu_{\M}|\partial_{j}(\nabla_{\M}\rho))\}\\
&=\beta[{l_{ij}}-{l_{ik}}{l^{k}_{j}}\rho+{\partial_{ij}\rho}-\Gamma^{k}_{ij}\partial_{k}\rho
+r_k^l(\rho)(\partial_{j}l_{i}^{k}+\Gamma^{k}_{jh}l_{i}^{h}-\Gamma^{h}_{ij}l_{h}^{k})\rho\partial_{l}\rho \\
&\hspace*{1em}+r_k^l(\rho)l^{k}_{i}\partial_{j}\rho\partial_{l}\rho +r_k^l(\rho)l^{h}_{j}l^{k}_{h}\rho\partial_{i}\rho\partial_{l}\rho +{l^{k}_{j}}\partial_{i}\rho\partial_{k}\rho].
\end{align*}
We have used \eqref{derivative of WT} and the following facts in the above computation:
\begin{itemize}
\item $(\nu_{\M}|\partial_j\nu_{\M})=0$.
\item $(\tau^{\Gamma}_{i}|\nu_{\Gamma})=0$.
\item $\partial_j \nu_{\M}=-l_{ij}\tau^i$.
\item $(I-\rho{L_{\M}^{\mathcal{E}}})^{-1}\nu_{\M}=\nu_{\M}$.
\item $\partial_j a(\rho)=(I-\rho{L_{\M}^{\mathcal{E}}})^{-1}\partial_j(\nabla_{\M}\rho)+\partial_j[(I-\rho{L_{\M}^{\mathcal{E}}})^{-1}]\nabla_{\M}\rho$.
\item $\partial_j[(I-\rho{L_{\M}^{\mathcal{E}}})^{-1}]=(I-\rho{L_{\M}^{\mathcal{E}}})^{-1} \partial_j(\rho{L_{\M}^{\mathcal{E}}}) (I-\rho{L_{\M}^{\mathcal{E}}})^{-1}$.
\end{itemize}

Therefore, $ {\det}(L^{\Gamma})$ can be expressed in every local chart as
\begin{align*}
{\det}(L^{\Gamma})=\beta^{2}(\rho)\frac{P^{\Gamma}(\rho,\partial_{j}\rho,\partial_{ij}\rho)}{Q^{\Gamma}(\rho)}.
\end{align*}
Here $P^{\Gamma}(\rho,\partial_{j}\rho,\partial_{ij}\rho)$ is a polynomial in $\rho$ and its derivatives up to second order with real analytic coefficients. Moreover, $Q^{\Gamma}(\rho)$ is a polynomial in $\rho$ with real analytic coefficients. In particular, we have $Q^{\Gamma}\neq{0}$.
\smallskip\\
In full view of the above computations, within every local chart $K_{\rho}= {\det}[G^{-1}_{\Gamma}(\rho)L^{\Gamma}]$ can be expressed locally as 
\begin{equation}
\label{Gaussian curvature}
K_{\rho}=\beta^{2}(\rho)\frac{P^{\Gamma}(\rho,\partial_{j}\rho,\partial_{ij}\rho)}{ {\det}(G^{\Gamma}(\rho)) Q^{\Gamma}(\rho)}.
\end{equation}
\smallskip\\

As a straightforward conclusion of the above computation, we obtain an explicit expression for $H_{\rho}:=\Psi^{\ast}_{\rho}H_{\Gamma_{\rho}}$:
\begin{align}
\label{mean curvature}
\notag 2 H_{\rho}&=g^{ij}_{\Gamma}l^{\Gamma}_{ij}\\
\notag&=\beta(\rho){g^{ij}_{\Gamma}}[{l_{ij}}-{l_{ik}}{l^{k}_{j}}\rho+{\partial_{ij}\rho}-\Gamma^{k}_{ij}\partial_{k}\rho
+r_k^l(\rho)l^{k}_{i}\partial_{j}\rho\partial_{l}\rho \\
&\hspace*{1em}+r_k^l(\rho)(\partial_{j}l_{i}^{k}+\Gamma^{k}_{jh}l_{i}^{h}-\Gamma^{h}_{ij}l_{h}^{k})\rho\partial_{l}\rho 
+r_k^l(\rho)l^{h}_{j}l^{k}_{h}\rho\partial_{i}\rho\partial_{l}\rho +{l^{k}_{j}}\partial_{i}\rho\partial_{k}\rho]{.}
\end{align}
The reader may also find a different global expression for $H_{\rho}$ in \cite[formula~(32)]{MCH}. We can decompose $H_{\rho}$ into $H_{\rho}=P_1(\rho)\rho+F_1(\rho)$:
\begin{align*}
F_1(\rho)=\frac{\beta(\rho)}{2} {g^{ij}_{\Gamma}}({l_{ij}}-{l_{ik}}{l^{k}_{j}}\rho)=\frac{\beta(\rho)}{2} {\rm{Tr}}[G^{-1}_{\Gamma}(\rho)(L^{\M}-\rho{L^{\M}}L_{\M})],
\end{align*}
where ${\rm{Tr}}(\cdot)$ denotes the trace operator, 
and
\begin{align*}
P_1(\rho)&=\frac{\beta(\rho)}{2}\{{g^{ij}_{\Gamma}}\partial_{ij}
+{g^{ij}_{\Gamma}}({l^{k}_{j}}\partial_{i}\rho-\Gamma^{k}_{ij})\partial_{k}
\\
&\hspace*{1em}+{g^{ij}_{\Gamma}}[r_k^l(\rho)l^{k}_{i}\partial_{j}\rho +
r_k^l(\rho)(\partial_{j}l_{i}^{k}+\Gamma^{k}_{jh}l_{i}^{h}-\Gamma^{h}_{ij}l_{h}^{k})\rho 
+r_k^l(\rho)l^{h}_{j}l^{k}_{h}\rho\partial_{i}\rho ]\partial_{l} \}
\end{align*}
in every local chart. Note that ${\rm{Tr}}[G^{-1}_{\Gamma}(\rho)L^{\M}]$ changes like $H_{\M}$ under transition maps and thus is invariant. Analogously, we can check that $F_1$ is a well-defined global operator. Hence so is $P_1(\rho)$.
\smallskip\\
In addition, it is a well-known fact that $\Psi_{\rho}^{\ast}\Delta_{\Gamma_{\rho}}=\Delta_{\rho}\Psi_{\rho}^{\ast}$, where $\Delta_{\Gamma_{\rho}}$ and $\Delta_{\rho}$ are the Laplace-Beltrami operators on $(\Gamma_{\rho},g_{\Gamma})$ and $({\M},\sigma(\rho))$, respectively. Here $\sigma(\rho):=\Psi_{\rho}^{\ast}g_{\Gamma}$ stands for the pull-back metric of $g_{\Gamma}$ on ${\M}$ by $\Psi_{\rho}$. Then in every local chart, the Laplace-Beltrami operator $\Delta_{\rho}$ can be expressed as
\begin{equation}
\label{Laplacian}
\Delta_{\rho}=\sigma^{jk}(\rho)(\partial_j\partial_k-\gamma^i_{jk}(\rho)\partial_i).
\end{equation} 
Here $\sigma^{jk}(\rho)$ are the components of the induced metric $\sigma^{\ast}(\rho)$ of $\sigma(\rho)$ on the cotangent bundle. Note that $\sigma^{jk}(\rho)$ involves the derivatives of $\rho$ merely up to order one. $\gamma^i_{jk}(\rho)$ are the corresponding Christoffel symbols of $\sigma(\rho)$, which contain the derivatives of $\rho$ up to second order.

There exists a global operator $R(\rho)\in{\mathcal{L}(h^{3+\alpha}({\M}),E_0)}$ such that $R(\cdot)$ is well defined on $\mho$ and:
\begin{align*}
R(\rho)\rho=\frac{1}{2\beta(\rho)}\Delta_{\rho}[\beta(\rho){\rm{Tr}}(G^{-1}_{\Gamma}(\rho)L^{\M})]-\frac{\rho}{2\beta(\rho)}\Delta_{\rho}[\beta(\rho){\rm{Tr}}(G^{-1}_{\Gamma}(\rho){L^{\M}}L_{\M})]{.}
\end{align*}
We set
\begin{align*}
&P(\rho):=\frac{1}{\beta(\rho)}\Delta_{\rho}P_1+R(\rho),\hspace*{13.4em}\rho\in\mho{,}\\
&F(\rho):=-\frac{1}{\beta(\rho)}\Delta_{\rho}F_1(\rho)+R(\rho)\rho-\frac{2}{\beta(\rho)}H_{\rho}(H_{\rho}^{2}-K_{\rho}),\hspace*{1.5em}\rho\in\mho\cap{h^{3+\alpha}({\M})}.
\end{align*}
Note that third order derivatives of $\rho$ do not appear in $F(\rho)$. Hence it is actually well-defined on $\mho$. Based on the above discussion, these two maps enjoy the following smoothness properties:
\begin{center}
$P\in{C}^{\infty}(\mho,\mathcal{L}(E_1,E_0))$\hspace{1em}and\hspace{1em}$F\in{C}^{\infty}(\mho,E_0)$.
\end{center}
\begin{definition}
\label{differential operators}
Let $l\in{\N}_0$. A linear operator $\mathcal{A}:\mathcal{D}({\M})\rightarrow{C}({\M})$ is called a differential operator of order $l$ with continuous coefficients on ${\M}$ if for any $u\in\mathcal{D}({\M})$ it holds that 
\begin{center}
$\psi_{\kappa}^{\ast}(\mathcal{A}u)=\mathcal{A}_{\kappa}(\psi_{\kappa}^{\ast}u)$
\end{center} 
for every local chart $({\Uk},\varphi_{\kappa})$ and some differential operator $\mathcal{A}_{\kappa}=\sum_{|\alpha|\leq{l}}a^{\kappa}_{\alpha}\partial^{\alpha}$ with $a^{\kappa}_{\alpha}\in{C({\Qt})}$ defined on ${\Qt}$, and at least one of the $\mathcal{A}_{\kappa}$'s is of order $l$. In particular, when $l=0$, $\mathcal{A}u=au$ for some $a\in{C({M})}$.
\end{definition}
By the above definition, $P(\rho)$ is a fourth order differential operator with continuous coefficients on ${\M}$ for each $\rho\in\mho$. In every local chart $({\Uk},\varphi_{\kappa})$, the principal part of the local expression of $P(\rho)$  can be written as
\begin{align*}
P^{\pi}_{\kappa}(\rho):=\sigma^{kl}(\rho)g^{ij}_{\Gamma}\partial_{ijkl}.
\end{align*}
Given $\xi\in{T^{\ast}{\M}}$, we estimate the symbol of $P^{\pi}_{\kappa}(\rho)$ as follows.
\begin{align*}
P^{\pi}_{\kappa}(\rho)(\xi)=\sigma^{\ast}(\rho)(\xi,\xi)g_{\Gamma}^{\ast}(\xi,\xi)\geq{c|\xi|^4}
\end{align*}
for some $c>0$, and $g_{\Gamma}^{\ast}$ denotes the induced metric of $g_{\Gamma}$ on the cotangent bundle of ${\M}$. Hence, $P(\rho)$ is a uniformly elliptic fourth order operator acting on functions over ${\M}$ for each $\rho\in\mho$. By \cite[Theorem 4.3]{YS1P}, $P(\rho)\in\mathcal{H}(E_1,E_0)$, namely that $-P(\rho)$ generates an analytic semigroup on $E_0$ with $D(-P(\rho))=E_1$, $\rho\in\mho$. 
\smallskip\\

Now the Willmore flow \eqref{original eq 1.1} can be rewritten as:
\begin{equation}
\label{transformed eq 1.1}
\begin{cases}
\rho_t+P(\rho)\rho=F(\rho),\\
\rho(0)=\rho_0{,}
\end{cases}
\end{equation}
where $\rho_0\in\mho$. A different characterization of the problem can be found in \cite{CMA,SDF,GSWF}.
\smallskip\\

Applying \cite[Theorem 4.1]{MCS}, the existence and regularity result in \cite{GSWF} can be restated as:
\goodbreak
\begin{theorem}
\label{well-posedness}
\cite[Theorem 1.1]{GSWF} Suppose that $\rho_{0}\in{h^{2+\alpha}({\M})}$. Then equation \eqref{transformed eq 1.1} has a unique solution $\rho$ such that
\begin{center}
$\rho\in{C^1_{\frac{1}{2}}(J(\rho_0),E_0)\cap{C_{\frac{1}{2}}(J(\rho_0),E_1)}\cap{C(J(\rho_0),h^{2+\alpha}({\M}))}\cap{C^{\frac{1}{2}-\beta_0}(J(\rho_0),E_{\beta_0})}}$
\end{center}
for any $\beta_0\in[0,\frac{1}{2}]$. Moreover, each hypersurface $\Gamma(t)$ is of class $C^{\infty}$ for $t\in\dot{J}(\rho)$.
\end{theorem}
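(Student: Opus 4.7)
The plan is to cast \eqref{transformed eq 1.1} as a quasilinear parabolic Cauchy problem in the continuous interpolation scale $(E_0,E_1)$ and to invoke the maximal regularity theory for such problems in weighted Hölder spaces, as developed by Cl\'ement, Angenent, Lunardi, and Simonett (the specific reference the excerpt points to is Theorem 4.1 of \cite{MCS}).

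The key structural ingredients have already been verified in the preceding discussion: the nonlinear operators satisfy $P\in C^{\infty}(\mho,\mathcal{L}(E_1,E_0))$ and $F\in C^{\infty}(\mho,E_0)$; moreover, for every $\rho\in\mho$ the operator $P(\rho)$ is uniformly elliptic of order four, and by \cite[Theorem 4.3]{YS1P} one has $P(\rho)\in\mathcal{H}(E_1,E_0)$ with $D(-P(\rho))=E_1$. Combined with the openness of $\mho$ in $h^{2+\alpha}(\M)$, these are exactly the hypotheses under which the Cl\'ement--Simonett theorem applies. First I would record the identification $(E_0,E_1)_{1/2}=h^{2+\alpha}(\M)$ in the continuous interpolation method; this is the reason the weight $\gamma=1/2$ is the natural one for an initial datum $\rho_0\in h^{2+\alpha}(\M)$, placing $\rho_0$ precisely in the trace space of the solution class.

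With the framework set, applying \cite[Theorem 4.1]{MCS} to \eqref{transformed eq 1.1} yields, for every $\rho_0\in\mho\subset E_{1/2}$, a maximal interval of existence $J(\rho_0)=[0,T(\rho_0))$ and a unique solution
\begin{equation*}
\rho\in C^1_{1/2}(J(\rho_0),E_0)\cap C_{1/2}(J(\rho_0),E_1),
\end{equation*}
which by continuity of the trace at $t=0$ also belongs to $C(J(\rho_0),E_{1/2})=C(J(\rho_0),h^{2+\alpha}(\M))$. The additional Hölder regularity $\rho\in C^{1/2-\beta_0}(J(\rho_0),E_{\beta_0})$ for each $\beta_0\in[0,1/2]$ then follows from a standard interpolation argument between $C^1_{1/2}(J(\rho_0),E_0)$ and $C_{1/2}(J(\rho_0),E_1)$ using the mixed-derivative/reiteration properties of the continuous interpolation functor; this is built into the conclusion of the cited maximal-regularity theorem.

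The remaining assertion is the $C^\infty$-smoothness of $\Gamma(t)$ for $t\in\dot{J}(\rho_0)$. The plan is a parabolic bootstrap exploiting the smoothness of $P$ and $F$ and the instantaneous regularization of the analytic semigroup generated by $-P(\rho(t_0))$ for any $t_0>0$: once $\rho(t_0)\in E_{1/2}$, standard analytic-semigroup smoothing (together with the $C^\infty$-dependence of the coefficients on $\rho$) shows $\rho(t)\in E_1=h^{4+\alpha}(\M)$ for $t>t_0$, and one can iterate by re-freezing the base point and replacing $(E_0,E_1)$ by $(h^{k+\alpha},h^{k+4+\alpha})$ at each stage. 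I expect the main technical point to be verifying that the increased-order scales still fit into the Cl\'ement--Simonett framework with the same operator $P(\rho)$ generating an analytic semigroup in the higher-order pair; this rests on the fact that $P(\rho)$ is a genuine differential operator in the sense of Definition~\ref{differential operators} whose coefficients depend smoothly on $\rho$, so the generation result of \cite{YS1P} lifts to every little-Hölder scale. The overall conclusion this achieves is exactly Simonett's original statement \cite[Theorem 1.1]{GSWF}, but now phrased within the weighted continuous-interpolation framework tailored for the truncated translation/implicit function argument used later in the paper.
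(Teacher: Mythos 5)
Your proposal is correct and follows essentially the same route as the paper, which establishes this theorem by verifying the hypotheses of the Cl\'ement--Simonett maximal regularity theorem \cite[Theorem~4.1]{MCS} for the quasilinear problem \eqref{transformed eq 1.1} (namely $P\in C^{\infty}(\mho,\mathcal{L}(E_1,E_0))$, $F\in C^{\infty}(\mho,E_0)$, and $P(\rho)\in\mathcal{H}(E_1,E_0)$, with $(E_0,E_1)_{1/2}=h^{2+\alpha}({\M})$) and then citing Simonett's original result \cite[Theorem~1.1]{GSWF} for the $C^{\infty}$-smoothness of $\Gamma(t)$. Your bootstrap sketch for the smoothness assertion is consistent with how that cited result is obtained.
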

\bigskip
\vspace{1em}

\section{\bf Parameter-Dependent Diffeomorphisms}
\medskip
The main purpose of the last two sections is to show that the classical solution obtained in Theorem \ref{well-posedness} is in fact real analytic jointly in time and space. To this end, I will construct a family of parameter-dependent diffeomorphisms acting on functions over ${\M}$ first. Because the construction applies to manifolds of arbitrary dimensions, in this section we assume that ${\M}$ is a m-dimensional manifold with the properties imposed in Section 1.

For a given point $p\in{\M}$, we choose a normalized atlas $({\Uk},\varphi_{\kappa})_{{\kappa}\in {\Lambda}}$ for ${\M}$ such that $\varphi_{1}(p)=0\in{\R}^m$. Choose several open subsets $B_{i}$ in ${\Q}$, the open unit ball centered at the origin in $\R^m$, in such a manner that:
\begin{itemize}
\item  $B_{i}:=\mathbb{B}^{m}(0,i\varepsilon_{0})$,\hspace{.5em} for $i=1,2,3$ and some $\varepsilon_{0}>0$.
\item  ${B_{3}}\subset\subset{B_{4}}\subset\subset{\Q}$.
\end{itemize}
\smallskip
Next, I further pick two cut-off functions on ${\Q}$:
\begin{itemize}
\item  $\chi\in{\mathcal{D}(B_{2},[0,1])}$ such that $\chi|_{\overline{B}_{1}}\equiv{1}$. We write $\chi_{\kappa}=\varphi^{\ast}_{\kappa}\chi$.
\item  $\zeta\in\mathcal{D}(B_{4},[0,1])$ such that $\tilde{\chi}|_{\overline{B}_{3}}\equiv{1}$. We write $\zeta_{\kappa}=\varphi^{\ast}_{\kappa}\zeta$.
\end{itemize}

We define a rescaled translation on ${\Q}$ for any ${\mu}\in{\B}\subset{\R}^m$ with $r$ sufficiently small:
\begin{center}
$\theta_{\mu}(x):=x+\chi{(x)}\mu$,\hspace{1em} $x\in{\Q}$.
\end{center}
\smallskip
This localization technique in Euclidean spaces was first introduced in \cite{ARP} by J.~Escher, J.~Pr\"uss and G.~Simonett to establish regularity for solutions to parabolic and elliptic equations.
\smallskip\\
Given a function $v\in{L_{1,loc}({\Q})}$, its pull-back and push-forward induced by $\theta_{\mu}$ are defined as:
\begin{center}
${\theta^{\ast}_{\mu}}v:=v\circ{\theta_{\mu}}$\hspace{1em} and \hspace{1em}$\theta^{\mu}_{\ast}v:=v\circ{\theta_{\mu}^{-1}}$.
\end{center}

The diffeomorphism $\theta_{\mu}$ induces a transformation $\Theta_{\mu}$ on ${\M}$ by:
\begin{align*}
\Theta_{\mu}(q)=
\begin{cases}
\psi_1(\theta_{\mu}(\varphi_1(q))) \hspace{1em}&q\in{\U}_1,\\
q &q\notin{\U}_1.
\end{cases}
\end{align*}
\smallskip\\
It can be shown that $\Theta_{\mu}\in{\rm{Diff}^{\hspace{.2em}\infty}}({\M})$ for $\mu\in{\B}$ with sufficiently small $r>0$. See \cite{YS1P} for details.
\smallskip\\
For any $u\in{L_{1,loc}({\M})}$, we can define its pull-back and push-forward induced by $\Theta_{\mu}$ analogously as:
\begin{center}
${\ttm}u:=u\circ{\Theta_{\mu}}$\hspace{1em} and \hspace{1em}$\Theta^{\mu}_{\ast}u:=u\circ{\Theta_{\mu}^{-1}}$.
\end{center}
We may find an explicit global expression for the transformation ${\ttm}$ on ${\M}$,  
\begin{center}
${\ttm}u={\varphi^{\ast}_{1}}{\theta^{\ast}_{\mu}}{\psi_{1}^{\ast}}({\Xt}u)+(1-{\Xt})u$.
\end{center}
Here and in the following it is understood that a partially defined and compactly supported function is automatically extended over the whole base manifold by identifying it to be zero outside its original domain. 
\smallskip\\
Likewise, we can express ${\Theta}^{\mu}_{\ast}$ as
\begin{center}
${\Theta}^{\mu}_{\ast}={\varphi^{\ast}_{1}}{\theta^{\mu}_{\ast}}{\psi_{1}^{\ast}}({\Xt}u)+(1-{\Xt})u$.
\end{center}
\smallskip
Let $I=[0,T]$, $T>0$. Assuming that $J\subset(0,T)$ is an open interval and $t_{0}\in{J}$ is a fixed point, we choose $\varepsilon_{0}$ to be so small that $\mathbb{B}(t_{0},3\varepsilon_{0})\subset{J}$. Next we pick another auxiliary function 
\begin{center}
$\xi\in\mathcal{D}(\mathbb{B}(t_{0},2\varepsilon_{0}),[0,1])$\hspace{1em} with\hspace{.5em} $\xi|_{\mathbb{B}(t_{0},\varepsilon_{0})}\equiv{1}$. 
\end{center}
The above construction now engenders a parameter-dependent transformation in terms of the time variable:
\begin{center}
$\varrho_{\lambda}(t):=t+\xi(t)\lambda$,\hspace{.5em} for any $t\in{I}$ and $\lambda\in{\R}$.
\end{center}
\smallskip
Now we are in a situation to define a family of parameter-dependent transformations on $I\times{\M}$. Given a function $u:I\times{\M}\rightarrow{\R}$, we set
\begin{center}
${{u}_{\lambda,\mu}}(t,\cdot):={\ttl}u(t,\cdot):={\tu}(t){\rh}u(t,\cdot)$,
\end{center}
where ${\tu}(t)={\Theta}^{\ast}_{\xi(t)\mu}$ and $(\lambda,\mu)\in{\B}$.
\smallskip\\
It is important to note that ${{u}_{\lambda,\mu}}(0,\cdot)=u(0,\cdot)$ for any $(\lambda,\mu)\in{\B}$ and any function $u$.

The importance of this family of parameter-dependent diffeomorphisms lies in the following theorems. Their proofs as well as additional properties of this technique can be found in \cite{YS1P}.
\begin{theorem}
\label{inverse}
Let $k\in{\N}_{0}\cup\{\infty,\omega\}$. Suppose that $u\in{C(I\times{\M})}$. Then we have that $u\in{C^{k}(\dot{I}\times{\M})}$ iff for any $(t_0,p)\in\dot{I}\times{\M}$, there exists $r=r(t_0,p)>0$ and a corresponding family of parameter-dependent diffeomorphisms ${\ttl}$ such that 
\begin{center}
$[(\lambda,\mu)\mapsto{\ttl}u]\in{C^{k}({\B},C(I\times{\M}))}$.
\end{center}
Here $\omega$ is the symbol for real analyticity. 
\end{theorem}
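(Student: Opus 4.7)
The implication $(\Rightarrow)$ is the routine direction and reduces to regularity of composition with an analytic parameter family; $(\Leftarrow)$ becomes a short evaluation argument once one exploits that the cut-offs $\xi$ and $\chi$ are identically $1$ near $t_0$ and near $\varphi_1(p)=0$ respectively.

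For $(\Rightarrow)$, assume $u\in C^{k}(\dot I\times\M)$ and fix $(t_0,p)\in\dot I\times\M$ together with the associated cut-offs and radius $r$. Unpacking definitions,
\begin{equation*}
\ttl u(t,q)\;=\;u\bigl(\varrho_\lambda(t),\,\Theta_{\xi(t)\mu}(q)\bigr),
\end{equation*}
and the map $(\lambda,\mu,t,q)\mapsto(\varrho_\lambda(t),\Theta_{\xi(t)\mu}(q))$ is \emph{affine} in $(\lambda,\mu)$, via $\varrho_\lambda(t)=t+\xi(t)\lambda$ and, in coordinates, $\theta_{\xi(t)\mu}(x)=x+\chi(x)\xi(t)\mu$. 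For finite $k$, the chain rule expresses $\partial^{\alpha}_{(\lambda,\mu)}(\ttl u)$ as a finite sum of products of derivatives of $u$ evaluated at $(\varrho_\lambda(t),\Theta_{\xi(t)\mu}(q))$ with polynomial factors in $\xi,\chi$ and their derivatives; uniform continuity of these derivatives on compact subsets of $\dot I\times\M$ then yields membership in $C^k(\B,C(I\times\M))$. The case $k=\infty$ iterates this. For $k=\omega$, cover $I\times\M$ by finitely many charts on which $u$ is given by power series of common positive radius, and use the polynomial dependence of $\varrho_\lambda,\theta_{\xi(t)\mu}$ on $(\lambda,\mu)$ together with compactness to extract a common radius in $(\lambda,\mu)$ on which the Taylor series of $\ttl u$ converges uniformly in $(t,q)$, hence in $C(I\times\M)$; this may force shrinking $r$.

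For $(\Leftarrow)$, fix $(t_0,p)\in\dot I\times\M$ with associated data. Since $\xi\equiv 1$ on $\mathbb{B}(t_0,\varepsilon_0)$ and $\chi\equiv 1$ on $\overline{B}_1$, we have $\xi(t_0)=1$ and $\chi(\varphi_1(p))=\chi(0)=1$; hence $\varrho_\lambda(t_0)=t_0+\lambda$ and $\Theta_{\xi(t_0)\mu}(p)=\psi_1(\theta_\mu(0))=\psi_1(\mu)$, so
\begin{equation*}
\ttl u(t_0,p)\;=\;u\bigl(t_0+\lambda,\,\psi_1(\mu)\bigr),\qquad (\lambda,\mu)\in\B.
\end{equation*}
Composing the hypothesis map $(\lambda,\mu)\mapsto\ttl u\in C(I\times\M)$ with the bounded linear evaluation functional $v\mapsto v(t_0,p)$ shows $(\lambda,\mu)\mapsto u(t_0+\lambda,\psi_1(\mu))$ lies in $C^k(\B,\R)$. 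The change of variables $s=t_0+\lambda$, $y=\mu$ then places $u\circ(\mathrm{id}\times\psi_1)$ in $C^k$ on a neighborhood of $(t_0,0)\in\R\times\R^m$, which by the atlas-based definition of $C^k$ regularity on the product manifold means $u$ is $C^k$ near $(t_0,p)$ in $\dot I\times\M$. Arbitrariness of $(t_0,p)$ completes the proof.

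The principal obstacle is the real analytic case of $(\Rightarrow)$: one must upgrade the pointwise analytic expansion of $u$ to a Taylor expansion in $(\lambda,\mu)$ whose radius of convergence is \emph{uniform} over all of $I\times\M$, so that convergence holds in the Banach space $C(I\times\M)$ rather than merely pointwise. Compactness of $I\times\M$ and the affine (hence entire) form of $\varrho_\lambda,\theta_{\xi(t)\mu}$ in $(\lambda,\mu)$ make this feasible, but the quantitative book-keeping on Taylor coefficients is the heart of the matter. The $(\Leftarrow)$ direction, by contrast, is essentially immediate once the identities $\xi(t_0)=1$, $\chi(0)=1$ are observed.
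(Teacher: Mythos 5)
The paper itself gives no proof of Theorem \ref{inverse} (it is deferred to \cite{YS1P}), but your argument is the standard one for this truncated-translation technique --- evaluation at $(t_0,p)$ using $\xi(t_0)=\chi(0)=1$ for the reverse implication, and the chain rule with uniform Cauchy estimates for the forward one --- and it is correct in outline. The one imprecision worth fixing is in the analytic case of $(\Rightarrow)$: you cannot cover all of $I\times{\M}$ by charts on which $u$ has a convergent power series, since $u$ is only assumed continuous up to $\partial I$; instead observe that ${\ttl}u=u$ unless $t\in\mathbb{B}(t_0,2\varepsilon_0)$, and that the moved points $(\varrho_\lambda(t),\Theta_{\xi(t)\mu}(q))$ then range over a compact subset of $\dot{I}\times{\M}$, so the uniform-radius bookkeeping need only be carried out on that compact set.
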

\bigskip
\goodbreak

\begin{prop}
\label{time derivatives}
Suppose that $u\in{\ef}(I)$. Then $u_{\lambda,\mu} \in{\ef}(I)$, and 
\begin{align*}
\partial_t[u_{\lambda,\mu}]=(1+\xi^{\prime}\lambda){\ttl}u_t+B_{\lambda,\mu}(u_{\lambda,\mu}),
\end{align*}
where 
\begin{center}
$[(\lambda,\mu)\mapsto{B}_{\lambda,\mu}]\in{C}^{\omega}({\B},C(I,\mathcal{L}(E_1,E_0)))$. 
\end{center}
Furthermore, $B_{\lambda,0}=0$.
\end{prop}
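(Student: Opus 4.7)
The plan is to apply the product rule to the definition $u_{\lambda,\mu}(t) = \tu(t)\rh u(t,\cdot)$ and then to rewrite the resulting extra term by using the inverse of $\tu(t)$. The membership $u_{\lambda,\mu}\in\ef(I)$ will follow once we know that $\tu(t)$ is a continuous family of isomorphisms on both $E_0$ and $E_1$ and that $\varrho_\lambda$ is a smooth reparametrization of $I$, both of which are established in \cite{YS1P}.

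The chain rule applied to $(\rh u)(t,\cdot)=u(\varrho_\lambda(t),\cdot)$ yields $\partial_t(\rh u)=(1+\xi'(t)\lambda)\,\rh u_t$, so that $\tu(t)\,\partial_t(\rh u)=(1+\xi'\lambda)\,\ttl u_t$. It therefore remains to analyze $[\partial_t\tu(t)](\rh u)$. Using the global formula $\tu(t)f=\varphi_{1}^{\ast}\theta^{\ast}_{\xi(t)\mu}\psi_{1}^{\ast}(\Xt f)+(1-\Xt)f$ together with the identity $\partial_s[\theta^{\ast}_{s\mu}w](x)=\chi(x)(\mu\cdot\nabla w)(\theta_{s\mu}(x))$, one identifies $\partial_t\tu(t)$ as a first-order differential operator, localized in $\U_1$, whose coefficients vanish at $\mu=0$. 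Substituting $\rh u=\Theta^{\xi(t)\mu}_{\ast}u_{\lambda,\mu}$ then forces the definition
\begin{align*}
B_{\lambda,\mu}(t):=[\partial_t\tu(t)]\circ\Theta^{\xi(t)\mu}_{\ast},
\end{align*}
which produces the claimed identity $\partial_t u_{\lambda,\mu}=(1+\xi'\lambda)\,\ttl u_t+B_{\lambda,\mu}(u_{\lambda,\mu})$.

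Real analyticity of $(\lambda,\mu)\mapsto B_{\lambda,\mu}$ will reduce to real analyticity in $\mu$ of $\theta^{\ast}_{\xi(t)\mu}$ and of $\theta_{\ast}^{\xi(t)\mu}$, uniformly in $t\in I$, on the appropriate little H\"older spaces. Since $\theta_{\xi\mu}(x)=x+\chi(x)\xi\mu$ is affine in $\mu$, the pull-back admits a convergent Taylor expansion, and the push-forward can be handled by a Neumann-type series after shrinking $r>0$; this is precisely the mapping content of the diffeomorphism lemmata in \cite{YS1P}. Combined with the smoothness of $\xi'$ in $t$, this will give $[(\lambda,\mu)\mapsto B_{\lambda,\mu}]\in C^{\omega}(\B,C(I,\mathcal{L}(E_1,E_0)))$. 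Finally, $\theta_0=\mathrm{id}$ for every $t$ forces $\tu(t)|_{\mu=0}\equiv\mathrm{id}$, so $\partial_t\tu(t)|_{\mu=0}\equiv 0$ and thus $B_{\lambda,0}=0$. The hardest part is this analyticity step, which requires careful bookkeeping of how the supports and coefficients of $\theta_{\xi(t)\mu}-\mathrm{id}$ depend on $\mu$ uniformly in $t$ before one can invoke the composition results of \cite{YS1P}.
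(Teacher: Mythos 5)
The paper does not actually prove this proposition — it explicitly defers the proof to \cite{YS1P} — so there is no in-paper argument to compare against; your proof is the natural and correct one: the product rule splits $\partial_t[\tu(t)\rh u]$ into the term $(1+\xi'\lambda)\ttl u_t$ plus $[\partial_t\tu(t)](\rh u)$, conjugating by $\Theta^{\xi(t)\mu}_{\ast}$ rewrites the latter as an operator acting on $u_{\lambda,\mu}$ itself, and $B_{\lambda,0}=0$ because $\tu(t)\equiv\mathrm{id}$ when $\mu=0$. The only detail to keep straight is the extra factor $\xi'(t)$ from the chain rule when passing from $\partial_s\theta^{\ast}_{s\mu}$ to $\partial_t\theta^{\ast}_{\xi(t)\mu}$ (which also makes $B_{\lambda,\mu}$ supported in time where $\xi'\neq 0$), and you implicitly account for this in the analyticity step.
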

\bigskip
\goodbreak
\begin{prop}
\label{regularity of differential operators}
Let $s\in[0,h]$ and $l\in\N_0$. Suppose that $\mathcal{A}$ is a differential operator of order $l$ with continuous coefficients on ${\M}$ satisfying $a^{\kappa}_{\alpha}\in{BC^h({\Q})}$ and $a^{1}_{\alpha}\in{BC^h({\Q})}\cap{C}^{\omega}(\sf{O})$ for some open subset $\sf{O}$ such that $B_3\subset\subset{\sf{O}}\subset\subset{\Q}$. Then 
\begin{center}
$[\mu\mapsto{T}_{\mu}\mathcal{A}T_{\mu}^{-1}]\in{C}^{\omega}({\B},C(I,\mathcal{L}(h^{s+l}({\M}),h^{s}({\M}))))$.
\end{center} 
\end{prop}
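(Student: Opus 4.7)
The plan is to reduce the assertion to a local computation on $\Q$, where the parameter dependence is explicit through the polynomial formula $\theta_\mu(x) = x + \chi(x)\mu$, and then to exploit the real analyticity of $a^1_\alpha$ on $\U$ to promote smooth $\mu$-dependence to real analytic $\mu$-dependence. Since $\Theta_\mu$ acts as the identity outside $\U_1$, the difference $T_\mu(t)\mathcal{A} T_\mu(t)^{-1} - \mathcal{A}$ is supported in $\U_1$, so it suffices to analyze the local conjugate $\theta_\mu^* \mathcal{A}_1 \theta_*^\mu$ on $\Q$, where $\mathcal{A}_1 = \sum_{|\alpha|\le l} a^1_\alpha \partial^\alpha$.

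The first step is to verify that $\theta_\mu$ is a diffeomorphism of $\Q$ for $|\mu|$ small: the Jacobian $D\theta_\mu = I + \mu \otimes \nabla\chi$ is uniformly invertible for small $\mu$, and $\theta_\mu$ coincides with the identity near $\partial\Q$. The analytic implicit function theorem applied to $(x,\mu) \mapsto \theta_\mu(x) - y$ shows that $\mu \mapsto \partial^\gamma \theta_\mu^{-1}$ is real analytic with values in $BC^h(\Q;\R^m)$ for every $|\gamma| \le l$. Repeated application of the chain rule then yields
\[\theta_\mu^* \mathcal{A}_1 \theta_*^\mu = \sum_{|\beta|\le l} b_\beta(\cdot,\mu)\, \partial^\beta,\]
where each $b_\beta(\cdot,\mu)$ is a polynomial expression in the factors $a^1_\alpha \circ \theta_\mu$ (with $|\alpha| \le l$) and $(\partial^\gamma \theta_\mu^{-1}) \circ \theta_\mu$ (with $|\gamma| \le l$). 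The second type of factor is real analytic into $BC^h(\Q)$ by the previous observation together with the fact that $\theta_\mu$ depends polynomially on $\mu$.

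The main obstacle is the first type of factor: showing that $\mu \mapsto a^1_\alpha \circ \theta_\mu$ is real analytic with values in $BC^h(\Q)$. Here I use that $\chi$ is supported in $B_2 \subset B_3 \subset\subset \U$, so $\theta_\mu$ only displaces points within $\overline{B_2}$, a compact subset of the analyticity domain of $a^1_\alpha$. Real analyticity of $a^1_\alpha$ on $\U$ supplies uniform Cauchy estimates of the form $\|\partial^k a^1_\alpha\|_{BC^h(\overline{B_2})} \le C\, k!/\delta^k$ for some $\delta>0$, and the Taylor series
\[a^1_\alpha(\theta_\mu(x)) = \sum_{k\ge 0} \frac{\chi(x)^k}{k!}(\mu \cdot \nabla)^k a^1_\alpha(x)\]
therefore converges absolutely in $BC^h(\Q)$ whenever $|\mu|<\delta/\|\chi\|_\infty$, establishing the required analyticity.

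To finish, for $s \in [0,h]$ we have $BC^h(\Q) \hookrightarrow BC^s(\Q)$, and multiplication by a $BC^s$ coefficient followed by a derivative of order at most $l$ is bounded $h^{s+l}(\Q) \to h^s(\Q)$; hence $\mu \mapsto \theta_\mu^* \mathcal{A}_1 \theta_*^\mu$ is real analytic into $\mathcal{L}(h^{s+l}(\Q), h^s(\Q))$. Transferring back to $\M$ via the global formulas that define $T_\mu$ and $T_\mu^{-1}$, and noting that the $t$-dependence enters only through the replacement $\mu \leftarrow \xi(t)\mu$ with $\xi \in C(I,[0,1])$, the real analytic series in $\mu$ remains real analytic in $\mu$ with continuous $t$-dependence, which yields $[\mu \mapsto T_\mu \mathcal{A} T_\mu^{-1}] \in C^\omega(\B, C(I, \mathcal{L}(h^{s+l}(\M), h^s(\M))))$.
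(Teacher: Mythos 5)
The paper does not actually prove this proposition: it is quoted from \cite{YS1P}, which is listed as ``in preparation,'' so there is no in-paper argument to compare yours against. That said, your proof is the standard localized-translation argument of \cite{ARP} adapted to the manifold setting, and it is essentially correct: the reduction to the single chart is legitimate because $\theta_\mu$ is the identity off $\operatorname{supp}\chi\subset B_2$ and $\mathcal{A}$ is a local operator; the conjugated coefficients are polynomial expressions in quantities like $(I+\mu\otimes\nabla\chi)^{-1}$, which are entire in $\mu$ with values in $BC^\infty(\Q)$, and in $a^1_\alpha\circ\theta_\mu$; and your Taylor-series step for $a^1_\alpha\circ\theta_\mu$ is exactly where the hypothesis $a^1_\alpha\in C^{\omega}({\sf O})$ with $B_3\subset\subset{\sf O}$ is consumed, since $\theta_\mu(\operatorname{supp}\chi)\subset B_3$ once $r<\varepsilon_0$. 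Two small points merit attention. First, when you take the $BC^h$-norm of the Taylor term $\chi^k(\mu\cdot\nabla)^k a^1_\alpha/k!$, derivatives falling on $\chi^k$ produce factors growing polynomially in $k$; these are harmless against the geometric decay, but the convergence estimate should be run on the full product, not just on $\|\partial^k a^1_\alpha\|_{BC^h}$. Second, your assertion that ``multiplication by a $BC^s$ coefficient is bounded $h^{s+l}(\Q)\to h^s(\Q)$'' is not literally true for little H\"older spaces at the endpoint $s=h$ (a $BC^h$ multiplier need not preserve the closure of smooth functions in $BC^h$); this borderline defect is already latent in the statement as quoted and is immaterial for the paper's application, where the coefficients are $BC^\infty$ and $s=\alpha$, but for $s<h$ you should invoke the embedding $BC^h\hookrightarrow h^s$ rather than mere $BC^s$ boundedness.
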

\bigskip
\goodbreak
\begin{prop}
\label{regularity of transformed functions}
Let $s\geq 0$. Suppose that $u\in {C^{\omega}(\psi_1(\U))\cap \mathit{h^{s}}({\M})}$,
where $\U$ is defined in Proposition \ref{regularity of differential operators}. Then
\begin{center}
$[\mu\mapsto{\tu}u]\in{C^{\omega}({\B},C(I,{h}^{s}({\M})))}$.
\end{center}
\end{prop}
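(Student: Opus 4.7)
The plan is to reduce the statement to a holomorphy question on the Euclidean ball $\Q$ by means of the explicit global formula for ${\tu}$, and then to exploit the real-analyticity of $u$ on $\psi_1(\sf{O})$ together with a Cauchy integral / complex extension argument in the parameter $\mu$.

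First, I would use the expression
\begin{align*}
{\tu}(t)\,u \;=\; \varphi_{1}^{\ast}\,\theta^{\ast}_{\xi(t)\mu}\,\psi_{1}^{\ast}({\Xt} u) \;+\; (1-{\Xt})u
\end{align*}
recorded in Section~3. The second summand is independent of $\mu$ and lies in $h^{s}({\M})$, so it contributes only a constant (hence real-analytic) term to $\mu\mapsto {\tu}u$. Since $\varphi_{1}^{\ast}$ is a bounded linear isomorphism between ${\Xt}h^{s}({\M})$ and its image in $h^{s}(\Q)$, it suffices to prove that
\begin{align*}
[\mu\mapsto \theta^{\ast}_{\xi(t)\mu} w] \;\in\; C^{\omega}\bigl(\B,\,C(I,h^{s}(\Q))\bigr),
\end{align*}
where $w:=\psi_{1}^{\ast}({\Xt}u)\in h^{s}(\Q)$, compactly supported in $\overline{B_{4}}$. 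Because $\zeta\equiv 1$ on $\overline{B_{3}}$, the function $w$ coincides on $B_{3}$ with $v:=\psi_{1}^{\ast}u$, which is real-analytic on $\sf{O}$ by hypothesis, where $\mathrm{supp}(\chi)\subset B_{2}\subset\subset B_{3}\subset\subset\sf{O}$.

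Next, shrinking $r$ once and for all, one arranges that $\theta_{\xi(t)\mu}(\overline{B_{2}})\subset B_{3}$ uniformly in $t\in I$ (using $|\xi(t)|\le 1$) and in $\mu\in\B$. Consequently $\theta^{\ast}_{\xi(t)\mu}w$ coincides with $w$ outside $\overline{B_{2}}$, and on $B_{2}$ equals $v(x+\chi(x)\xi(t)\mu)$. The real-analyticity of $v$ on $\sf{O}$ provides a holomorphic extension $\tilde v$ to a complex neighborhood $\sf{O}_{\mathbb{C}}$ of $\overline{B_{2}}$; for $|\mu|_{\mathbb{C}^{m}}$ sufficiently small the perturbed point $x+\chi(x)\xi(t)\mu$ remains in $\sf{O}_{\mathbb{C}}$ for every $x\in\overline{B_{2}}$ and every $t\in I$. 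Setting
\begin{align*}
\widehat T(\mu)(t,x) \;:=\;
\begin{cases}
\tilde v\bigl(x+\chi(x)\xi(t)\mu\bigr), & x\in B_{2},\\
w(x), & x\notin B_{2},
\end{cases}
\end{align*}
yields a candidate complex-analytic extension of $\mu\mapsto \theta^{\ast}_{\xi(t)\mu}w$ valued in $C(I,h^{s}(\Q))$, the two pieces matching on the transition annulus because $w=v$ there and $\chi\equiv 0$. Applying Cauchy's formula in $\mu$ to the explicit integrand produces the $\mu$-derivatives of any order and the desired holomorphy, and restricting to real $\mu$ gives the stated real-analyticity.

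The main obstacle is to verify that $\widehat T(\mu)$ really lies in the little H\"older space $h^{s}(\Q)$ with a uniform-in-$(t,\mu)$ bound, and depends continuously on $t$. This is the sort of composition estimate for $C^{\infty}$-diffeomorphisms acting on little H\"older spaces that is standard but somewhat tedious: one checks that $\chi$, $\xi$ and $\tilde v$ contribute controlled factors, and uses that $\tilde v$ and all its derivatives are bounded on a fixed compact set in $\sf{O}_{\mathbb{C}}$. Once this uniform boundedness is established, Cauchy's formula in $\mu$ transfers the holomorphy from the pointwise level to the Banach-space-valued level, and Proposition~\ref{regularity of transformed functions} follows. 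The case $u$ time-independent is precisely what is needed here, so no additional $\lambda$-dependence enters, and $\varrho^{\ast}_{\lambda}$ plays no role.
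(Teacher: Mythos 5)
The paper contains no proof of Proposition \ref{regularity of transformed functions}: it is quoted from \cite{YS1P}, listed as ``in preparation'', so there is no in-text argument to measure yours against. Judged on its own merits, your plan is sound and is evidently the one the construction of Section~3 is designed for: reduce via the global formula ${\tu}u=\varphi_1^{\ast}\theta^{\ast}_{\xi(t)\mu}\psi_1^{\ast}({\Xt}u)+(1-{\Xt})u$ to the Euclidean picture, observe that $\theta^{\ast}_{\xi(t)\mu}w$ differs from $w$ only on $\mathrm{supp}(\chi)$, where $w$ agrees with the real-analytic $v=\psi_1^{\ast}u$, and then work with a holomorphic extension $\tilde v$. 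You have also correctly identified why the hypothesis $u\in C^{\omega}(\psi_1(\U))$ is indispensable: without analyticity near $\mathrm{supp}(\chi)$ the map $\mu\mapsto\theta_{\mu}^{\ast}w$ would not even be differentiable into $h^{s}$. Two refinements are worth recording. First, the ``tedious composition estimate'' you defer is in fact easy in your setup: $\widehat T(\mu)-w$ is supported in $\mathrm{supp}(\chi)\subset\subset B_{2}$ and there equals $\tilde v(x+\chi(x)\xi(t)\mu)-v(x)$, a difference of functions smooth on a neighborhood of that compact set, so membership in $h^{s}(\Q)$ and the uniform bounds reduce to $C^{k}$-bounds ($k>s$) of $\tilde v$ on a fixed compact subset of the complex neighborhood, with $t$-continuity coming from the smoothness of $\xi$; no genuine composition theory in little H\"older spaces is needed. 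Second, the passage from pointwise holomorphy in $\mu$ to holomorphy of the $C(I,h^{s}(\Q))$-valued map deserves a word: either invoke the standard criterion that a locally bounded map which is weakly holomorphic with respect to a separating family of functionals (here the point evaluations) is holomorphic, or --- cleaner and more elementary --- substitute $z=\chi(x)\xi(t)\mu$ into the Taylor expansion of $\tilde v$ about $x$, which converges together with all derivatives uniformly for $x\in\mathrm{supp}(\chi)$ and hence yields a power series in $\mu$ convergent in the $C(I,h^{s})$-norm. With either of these supplied, your argument is complete.
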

\bigskip
\vspace{1em}

\section{\bf Real Analyticity}
\medskip
By setting $G(\rho):=P(\rho)\rho-F(\rho)$, we may rewrite equation \eqref{transformed eq 1.1} as 
\begin{equation}
\label{final eq 1.1}
\begin{cases}
\rho_t+G(\rho)=0{,}\\
\rho(0)=\rho_0{.}
\end{cases}
\end{equation}
\begin{theorem}
\label{main theorem 2}
Let $0<\alpha<1$. Suppose that $\rho_{0}\in{h^{2+\alpha}({\M})}$. Then equation \eqref{final eq 1.1} has a unique local solution $\rho$ in the interval of maximal existence $J(\rho_{0})$ such that 
\begin{center}
$\rho\in{C^{\omega}(\dot{J}(\rho_{0})\times{\M})}$.
\end{center}
\end{theorem}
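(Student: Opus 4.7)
The plan is to combine the parameter-dependent diffeomorphism machinery of Section~3 with the real analytic Implicit Function Theorem, in the localization spirit of Escher-Pr\"uss-Simonett. I would fix an arbitrary point $(t_0, p_0) \in \dot{J}(\rho_0) \times \M$, choose a normalized atlas so that $\varphi_1(p_0) = 0$, and take a closed time interval $I = [t_1, T] \subset \dot{J}(\rho_0)$ with $t_0 \in \dot{I}$ and $t_1 > 0$, together with cut-offs $\chi, \zeta, \xi$ (with $\xi$ supported strictly inside $\dot{I}$) and a small $r>0$ so that the family $\ttl = \tu\,\rh$ is defined for $(\lambda,\mu) \in \B$. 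By Theorem~\ref{well-posedness}, $\rho|_I \in \ef(I)$ with $\rho(t) \in \mho$ after possibly shrinking $I$. Setting $\rho_{\lambda,\mu} := \ttl \rho$ and applying Proposition~\ref{time derivatives} to \eqref{final eq 1.1}, the function $\rho_{\lambda,\mu}$ satisfies
\begin{equation*}
\partial_t \rho_{\lambda,\mu} - B_{\lambda,\mu}(\rho_{\lambda,\mu}) + (1+\xi^{\prime}\lambda)\,\tu\, G(\tu^{-1} \rho_{\lambda,\mu}) = 0, \qquad \rho_{\lambda,\mu}(t_1) = \rho(t_1),
\end{equation*}
the initial condition being unaffected by $(\lambda,\mu)$ since $\xi(t_1) = 0$ makes $\ttl$ trivial at $t = t_1$.

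Define $\Phi: \B \times \mathcal{V} \to \ez(I) \times E_1$, where $\mathcal{V}$ is an open neighborhood of $\rho$ in $\ef(I)$ consisting of functions with values in $\mho$, by
\begin{equation*}
\Phi(\lambda,\mu,v) := \bigl(\, \partial_t v - B_{\lambda,\mu}(v) + (1+\xi^{\prime}\lambda)\,\tu\, G(\tu^{-1} v),\; v(t_1) - \rho(t_1) \,\bigr).
\end{equation*}
Since $\ttl$ reduces to the identity when $(\lambda,\mu) = (0,0)$ and $B_{0,0} = 0$ (Proposition~\ref{time derivatives}), we have $\Phi(0,0,\rho) = 0$. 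The partial Fr\'echet derivative $D_v \Phi(0,0,\rho): \ef(I) \to \ez(I) \times E_1$ equals $(\partial_t + G'(\rho),\; v\mapsto v(t_1))$, whose principal part coincides with that of $\partial_t + P(\rho)$. Since $P(\rho) \in \mathcal{H}(E_1, E_0)$, the maximal continuous regularity theory underlying Theorem~\ref{well-posedness} (cf.~\cite{MCS}) makes $D_v \Phi(0,0,\rho)$ a topological isomorphism onto $\ez(I) \times E_1$.

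To invoke the real analytic Implicit Function Theorem, it remains to verify that $\Phi$ is jointly real analytic in $(\lambda, \mu, v)$ near $(0,0,\rho)$. Analyticity in $v$ follows from the formulae of Section~2: $\beta(\rho)$, $g^{ij}_{\Gamma}$, $H_\rho$, $K_\rho$, $\sigma^{jk}(\rho)$, $\gamma^i_{jk}(\rho)$, $r^j_i(\rho)$ are rational expressions in $\rho$ and its derivatives whose denominators stay bounded away from zero on $\mho$, so $G$ is even real analytic as a map of Banach spaces. Real analyticity in $\mu$ of the conjugation $\mu \mapsto \tu\, G(\tu^{-1}\cdot)$ is supplied by Propositions~\ref{regularity of differential operators} and \ref{regularity of transformed functions}, whose hypotheses are met because every coefficient entering the local expression of $P, F, R$ on the distinguished chart around $p_0$ is real analytic -- a consequence of the real analyticity of the reference manifold $\M$ arranged in Section~2. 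Analyticity in $\lambda$ is manifest from the scalar factor $(1+\xi^{\prime}\lambda)$ and from the $\lambda$-dependence asserted in Proposition~\ref{time derivatives}. The real analytic IFT therefore produces a unique real analytic branch $(\lambda,\mu) \mapsto v(\lambda,\mu) \in \ef(I)$ with $\Phi(\lambda,\mu,v(\lambda,\mu)) = 0$; uniqueness of the classical solution identifies $v(\lambda,\mu) = \rho_{\lambda,\mu}$. Theorem~\ref{inverse} then promotes $\rho$ to $C^\omega$ on a neighborhood of $(t_0, p_0)$ in $\dot{I} \times \M$, and as $(t_0, p_0)$ was arbitrary, $\rho \in C^{\omega}(\dot{J}(\rho_0) \times \M)$.

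The main technical obstacle I anticipate is verifying the hypothesis $a^{1}_{\alpha} \in C^{\omega}(\U)$ of Proposition~\ref{regularity of differential operators} for every coefficient appearing in the local representation of $G$ on the distinguished chart. This amounts to tracing through the composite definitions of $P(\rho)$, $F(\rho)$, and $R(\rho)$ to confirm that all the geometric quantities attached to $\M$ -- the metric components $g_{ij}$ and $g^{ij}$, Christoffel symbols, Weingarten components $l^j_i$ and their derivatives, together with the combinations entering $\sigma^{jk}(\rho)$, $\gamma^i_{jk}(\rho)$, $r^j_i(\rho)$, $G_\Gamma^{-1}(\rho)$, and $K_\rho$ -- are real analytic in the chart coordinates. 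This is ultimately a consequence of the real analyticity of $\M$ arranged in Section~2, but carrying out the bookkeeping through the intricate expressions of Section~2 is where most of the real work lies.
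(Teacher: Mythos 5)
Your proposal is correct and follows essentially the same route as the paper: transform the equation via $\ttl$ using Proposition \ref{time derivatives}, set up the map $\Phi$ on $\ef(I)\times\B$ vanishing on the branch $(\lambda,\mu)\mapsto\rho_{\lambda,\mu}$, verify joint real analyticity from the rational local structure of $G$ together with Propositions \ref{regularity of differential operators} and \ref{regularity of transformed functions}, identify $D_v\Phi(0,0,\rho)$ as $(\partial_t+DG(\rho),\gamma_{t_1})$ and invoke maximal regularity for the uniformly elliptic fourth order operator to get the isomorphism, then apply the Implicit Function Theorem and Theorem \ref{inverse}. The "technical obstacle" you flag at the end is exactly what the paper handles via the decomposition $G=\pi G+\sum_{\eta\notin\mathcal{C}(1)}\pi_{\eta}^{2}G$ and the explicit polynomial-quotient form of $\pi G$, so no substantive difference remains.
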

\begin{proof}
I will indicate herein all the key steps of the proof. More details can be found in \cite{YS1P}.

For any $(t_0,p)\in\dot{J}(\rho_0)\times{\M}$ and sufficiently small $r>0$, a family of parameter-dependent diffeomorphisms ${\ttl}$ can be defined for $(\lambda,\mu)\in{\B}$. Henceforth, we always use the notation $\rho$ exclusively for the solution to \eqref{transformed eq 1.1} and hence to \eqref{final eq 1.1}. Set $u:={\rho}_{\lambda,\mu}$. Then as a consequence of Proposition \ref{time derivatives}, $u$ satisfies the equation
\begin{align*}
u_t&=\partial_t[{\rho}_{\lambda,\mu}]=(1+\xi^{\prime}\lambda){\ttl}\rho_t+{B}_{\lambda,\mu}(u)\\
&=-(1+\xi^{\prime}\lambda){\ttl}G(\rho)+{B}_{\lambda,\mu}(u)\\
&=-(1+\xi^{\prime}\lambda){\tu}G({\rh}\rho)+{B}_{\lambda,\mu}(u)\\
&=-(1+\xi^{\prime}\lambda){\tu}G({T}^{-1}_{\mu}u)+{B}_{\lambda,\mu}(u):=-H_{\lambda,\mu}(u).
\end{align*}
Pick $I:[\varepsilon,T]\subset\subset{J(\rho_{0})}$ such that $t_{0}\in\dot{I}$ and $\mathbb{B}(t_0,3\varepsilon_0)\subset\subset\dot{I}$. Then we define ${\ez}(I)$ and ${\ef}(I)$ as in Section~1 by moving the initial point from $0$ to $\varepsilon$. Set 
\begin{align*}
\efa(I):=\{v\in{\ef}(I):\|v\|_{\infty}<a\},
\end{align*}
where $\|v\|_{\infty}:=\sup_{(t,q)\in I\times{\M}} |v(t,q)|$.
\smallskip\\
For $\mathcal{A}\in\mathcal{H}(E_1,E_0)$, we say that $({\ez}(I),{\ef}(I))$ is a pair of maximal regularity of $\mathcal{A}$, if
\begin{center}
$(\frac{d}{dt}+\mathcal{A},\gamma_{\varepsilon})\in{\rm{Isom}}({\ef}(I),{\ez}(I)\times{E_1})$,
\end{center}
where $\gamma_{\varepsilon}$ is the evaluation map at $\varepsilon$, i.e., $\gamma_{\varepsilon}(u)=u(\varepsilon)$. Next we define 
\begin{center}
$\Phi:{\efa}(I)\times{\B}\rightarrow{\ez}(I)\times{E_1}$\hspace{.5em} as\hspace{.5em} $\Phi(v,(\lambda,\mu))\mapsto\dbinom{v_t+H_{\lambda,\mu}(v)}{\gamma_{\varepsilon}(v)-\rho(\varepsilon)}$.
\end{center}
Note that $\Phi({\rho}_{\lambda,\mu},(\lambda,\mu))=\dbinom{0}{0}$ for any $(\lambda,\mu)\in{\B}$.

\noindent(i) My first goal is to prove that
$\Phi\in{C^{\omega}({\efa}(I)\times{\B},{\ez}(I)\times{E_1})}$.

By Proposition \ref{time derivatives}, ${B}_{\lambda,\mu}\in{C^{\omega}({\B},C(I,\mathcal{L}(E_1,E_0)))}$. We define a bilinear and continuous map:
\begin{center}
$f:C(I,\mathcal{L}(E_1,E_0))\times{{\ef}(I)}\rightarrow{\ez}(I)$,\hspace{.5em} $(T(t),u(t))\mapsto{T(t)(u(t))}$.
\end{center}
Hence $[(v,(\lambda,\mu))\mapsto{f({B}_{\lambda,\mu},v)}={B}_{\lambda,\mu}(v)]\in{C^{\omega}({\efa}(I)\times{\B},{\ez}(I))}$.

On the other hand, let $\pi=\sum_{\eta\in\mathcal{C}(1)}\pi_{\eta}^2$, where 
\begin{align*}
\mathcal{C}(1):=\{\eta\in\Lambda: {\rm{supp}}(\pi_{\eta})\cap{{\rm{supp}}(\pi_1)\not=\emptyset}\}.
\end{align*}
We decompose $G$ into
\begin{center}
$G=\pi{G}+\sum_{\eta\notin\mathcal{C}(1)}\pi_{\eta}^2G$.
\end{center}
According to our construction of ${\ttm}$ and of the localization system, we may assume that $\pi|_{\U}\equiv{1}$, where $\sf{O}$ is defined in Proposition \ref{regularity of differential operators} with $m=2$. See \cite[Lemma~3.2]{FSM} for details. 
\smallskip\\
Taking into account \eqref{Gaussian curvature}, \eqref{mean curvature} and \eqref{Laplacian}, in every local chart $({\Uk},\varphi_{\kappa})$ and for any $v\in{\efa}(I)$, $G(v)$ can be expressed as 
\begin{align*}
\frac{\beta^{2h}(v)P^{G}(v,\cdots,\partial_{ijkl}v)}{{ {\det}(G^{\Gamma}(v))^{s_1}} {\det}([\sigma(v)])^{s_2}Q^{G}(v)},
\end{align*}
where $h,s_1,s_2\in{\N}$. $[\sigma(v)]$ is the matrix representation of the metric $\sigma(v)$. Here $\sigma(v)$ is defined in a similar manner to $\sigma(\rho)$ with $\rho$ replaced by $v$. Analogously, $G^{\Gamma}(v)$ is defined in a similar way to $G^{\Gamma}(\rho)$. Meanwhile, $P^{G}$ is a polynomial in $v$ and its derivatives up to fourth order with real analytic coefficients, and $Q^{G}$ is a polynomial in $v$ with real analytic coefficients.
In particular, $ {\det}([\sigma(v)])$ only involves first order derivatives of $v$.
\smallskip\\
Therefore, $\pi{G}(v)$ can be decomposed globally into
\begin{align*}
\frac{\mathcal{P}_0+\mathcal{P}^1_1{v}\cdots\mathcal{P}^1_{k_1}{v}+\cdots+\mathcal{P}^r_1{v}\cdots\mathcal{P}^r_{k_r}{v}}{\mathcal{Q}_0+\mathcal{Q}^1_1{v}\cdots\mathcal{Q}^1_{l_1}{v}+\cdots+\mathcal{Q}^s_1{v}\cdots\mathcal{Q}^s_{l_s}{v}}, 
\end{align*}
where $\mathcal{P}_0$, $\mathcal{Q}_0\in C^{\infty}({\M})\cap C^{\omega}(\psi_1(\U))$. The $\mathcal{P}^i_{j_i}$'s are linear differential operators with continuous coefficients on ${\M}$ up to fourth order, and the $\mathcal{Q}^i_{j_i}$'s are linear differential operators of order at most one with continuous coefficients on ${\M}$. Their coefficients in every local chart satisfy that $a^{\kappa}_{\alpha}\in{BC^{\infty}({\Qt})}$ and $a^{1}_{\alpha}\in{BC^{\infty}({\Qt})}\cap{C^{\omega}({\U})}$. By Proposition \ref{regularity of transformed functions}, we deduce that 
\begin{align*}
[\mu\mapsto(\tu\mathcal{P}_0, \tu\mathcal{Q}_0)]\in C^{\omega}({\B},C(I,E_1)\times C(I,E_1)).
\end{align*}
Analogously, it follows from Proposition \ref{regularity of differential operators} that 
\begin{align*}
[\mu\mapsto{\tu}\mathcal{P}^i_{j_i}T^{-1}_{\mu}]\in{C^{\omega}({\B},C(I,\mathcal{L}(E_1,E_0)))}
\end{align*} 
and
\begin{align*}
[\mu\mapsto{\tu}\mathcal{Q}^i_{j_i}T^{-1}_{\mu}]\in{C^{\omega}({\B},C(I,\mathcal{L}(E_1,h^{3+\alpha}({\M}))))}.
\end{align*}
Combining the above discussion with point-wise multiplication theorems on Riemannian manifolds, we infer that 
\begin{align*}
[(v,\mu)\mapsto{\tu}(\pi{G})T^{-1}_{\mu}v]\in{C^{\omega}({\efa}(I)\times{\B},{\ez}(I))}.
\end{align*}
Applying these arguments repeatedly to the other terms $\pi_{\eta}^{2}G$, we conclude that
\begin{align*}
\Phi\in{C^{\omega}({\efa}(I)\times{\B},{\ez}(I)\times{E_1})}.
\end{align*}

\noindent(ii) Next we look at the Fr\'echet derivative of $\Phi$ in the first component:
\begin{align*}
D_1\Phi(v,(\lambda,\mu))w=\dbinom{w_t+(1+\xi^{\prime}\lambda){\tu}DG({T}^{-1}_{\mu}v){T}^{-1}_{\mu}w-{B}_{\lambda,\mu}(w)}{\gamma_{\varepsilon}w}.
\end{align*}
Thus
\begin{align*}
D_1\Phi(\rho,(0,0))w=\dbinom{w_t+DG(\rho)w}{\gamma_{\varepsilon}w}.
\end{align*}
Observe that $DG(\rho)$ is a fourth order linear differential operator whose coefficients satisfy $a^{\kappa}_{\alpha}\in{E_0}$. The principal part of $DG(\rho)$ in every local chart coincides with that of $P(\rho)$, that is, $P^{\pi}_{\kappa}(\rho)$. By the discussion in Section~2, we know that $DG(\rho(t,\cdot))$ is a uniformly elliptic operator for every fixed $t \geq 0$. As a consequence of \cite[Theorem~4.5, Proposition~4.7]{YS1P}, it follows that $({\ez}(I),{\ef}(I))$ is a pair of maximal regularity for $DG(\rho(t,\cdot))$.
\smallskip\\
We set $\mathcal{A}(t)=DG(\rho(t,\cdot))$. It follows that
\begin{align*}
(\frac{d}{dt}+\mathcal{A}(s),\gamma_{\varepsilon})\in{\rm{Isom}}({\ef}({I}),{\ez}({I})\times{E_1}),\hspace*{.5em}\text{ for every } s\in{{I}}.
\end{align*}
By \cite[Lemma 2.8(a)]{MCS}, we have
\begin{align*}
(\frac{d}{dt}+\mathcal{A}(\cdot),\gamma_{\varepsilon})\in{\rm{Isom}}({\ef}(\mathit{I}),{\ez}(\mathit{I})\times{E_1}).
\end{align*}
\smallskip\\
Now we are in a position to apply the Implicit Function Theorem. It follows right away that there exists an open neighborhood, say $\mathbb{B}(0,r_0)\subset{\B}$, such that
\begin{align*}
[(\lambda,\mu)\mapsto{\rho}_{\lambda,\mu}]\in{C^{\omega}(\mathbb{B}(0,r_0),{\ef}(I))}.
\end{align*}
As a consequence of Theorem \ref{inverse}, we deduce that $\rho\in{C^{\omega}(\dot{J}(\rho_0)\times{\M})}$. This completes the proof.
\end{proof}

\begin{proof}
[Proof of Theorem \ref{main theorem}]
For each $(t_0,q)\in\mathcal{M}=\bigcup_{t\in\dot{J}(\rho_0)}(\{t\}\times\Gamma(t))$, there exists a $p\in{\M}$ such that $\Psi_{\rho}(t_0,p)=q$. Here $\Gamma(t)={\rm{im}}(\Psi_{\rho}(t,\cdot))$. Theorem \ref{main theorem 2} states that there exists a local patch $(\Uk,\varphi_{\kappa})$ such that $p\in\Uk$ and $\rho\circ\psi_{\kappa}$ is real analytic in $\dot{J}(\rho_0)\times{\Qt}$. 
\smallskip
Therefore, we conclude that
\begin{align*}
[(t,x)\mapsto (t,\psi_{\kappa}(x)+\rho(t,\psi_{\kappa}(x))\nu_{\M}(\psi_{\kappa}(x))]\in{C^{\omega}(\dot{J}(\rho_0)\times{\Qt},\mathcal{M})}.
\end{align*}
This proves the assertion of Theorem \ref{main theorem}.
\end{proof}
\bigskip
\vspace{1em}

\end{document}